\newcommand{\abs}[1]{\left\vert{#1}\right\vert}
\newcommand{\norm}[1]{\left\Vert{#1}\right\Vert}
\DeclareMathOperator{\tr}{tr}
\DeclareMathOperator{\dist}{dist}
\newtheorem{thm}{Theorem}
\newtheorem{prop}[thm]{Proposition}
\newtheorem{lem}[thm]{Lemma}
\newtheorem{cor}[thm]{Corollary}
\theoremstyle{definition}
\newtheorem{rem}[thm]{Remark}
\title{Minimizers of the Landau-de Gennes energy around a spherical colloid particle}
\author{Stan Alama \and Lia Bronsard \and Xavier Lamy}
\date{\today}
\begin{document}
\maketitle

\begin{abstract}
We consider energy minimizing configurations of a nematic liquid crystal around a spherical colloid particle, in the context of the Landau-de~Gennes model.  The nematic is assumed to occupy the exterior of a ball $B_{r_0}$, and satisfy homeotropic weak anchoring at the surface of the colloid and approach a uniform uniaxial state as $|x|\to\infty$.  We study the minimizers in two different limiting regimes:  for balls which are small $r_0\ll L^{\frac12}$ compared to the characteristic length scale $L^{\frac 12}$, and for large balls, $r_0\gg L^{\frac12}$. The relationship between the radius and the anchoring strength $W$ is also relevant. For small balls we obtain a limiting quadrupolar configuration, with a ``Saturn ring'' defect for relatively strong anchoring, corresponding to an exchange of eigenvalues of the $Q$-tensor.  In the limit of very large balls we obtain an axisymmetric minimizer of the Oseen--Frank energy, and a dipole configuration with exactly one point defect is obtained.

\end{abstract}

\section{Introduction}\label{sec:intro}

Liquid crystals are well-known for their many applications in optical devices. The rod-like molecules in a nematic liquid crystal tend to align in a common direction: the resulting orientational order produces an anisotropic fluid with remarkable optical features. This anisotropy also makes it highly interesting to use nematic liquid crystals in colloidal suspensions. Immersion of colloid particles into a nematic system disturbs the orientational order and creates topological defects, which enforce fascinating self-assembly phenomena \cite{poulinstarklubenskyweitz97,musevicetal06}, with many potential applications \cite{senyuketal13,porentaetal14}. This sensitivity to inclusion of small foreign bodies also has promising biomedical applications \cite{woltmanjaycrawford07}: for instance, new biological sensors could detect very quickly the presence of microbes, based on the induced change in nematic order \cite{shiyanovskiietal05,helfinstinelavrentovichwoolverton06,hussainpinaroque09}. 

In the present paper we investigate the structure of the nematic order around one spherical particle, with homeotropic (\textit{i.e.} normal) anchoring at the particle surface, and uniform alignment far away from it. The homeotropic anchoring creates a topological charge. This charge must be balanced in order to match the uniform alignment at infinity, which is topologically trivial. Therefore  one expects to observe singularities. 

This particular problem is a crucial step in understanding more complex situations, and it has received a lot of attention in the past two decades \cite{terentjev95,kuksenoketal96,lubenskyetal98,stark99,ravnikzumer09}. These works rely on heuristically supported approximations, and numerical computations. They point out  two possible types of configurations, with \enquote{dipolar} or \enquote{quadrupolar} symmetry (related to their far-field behavior \cite[§~4.1]{stark01}). In a dipolar configuration the topological charge created by the particle is balanced by a point defect, while in a quadrupolar configuration it is balanced by a \enquote{Saturn ring} defect around the particle.

The aforementioned works use either Oseen-Frank theory \cite{terentjev95,kuksenoketal96,lubenskyetal98,stark99} or Landau-de Gennes theory \cite{ravnikzumer09} to describe nematic alignment. In Oseen-Frank theory, the order parameter is a director field $n(x)\in\mathbb S^2$, which minimizes an elastic energy. One drawback of that model is that line defects have infinite energy. In particular, the energy of a quadrupolar configuration with Saturn ring defect has to be renormalized. Moreover, Oseen-Frank theory only accounts for uniaxial nematic states: it assumes local axial symmetry of the alignment around the average director. On the other hand Landau-de Gennes theory involves a tensorial order parameter that can also describe biaxial states, in which the local axial symmetry is broken. This is the model that we will be using here.

The order parameter in Landau-de Gennes theory is the so-called $Q$-tensor, which belongs to the space
\begin{equation}\label{S0}
\mathcal S_0 :=\left\lbrace Q\in M_3(\mathbb R)\colon Q_{ij}=Q_{ji},\, \tr (Q)=0\right\rbrace,
\end{equation}
of symmetric traceless $3\times 3$ matrices. The eigenvectors of $Q$ represent the average directions of alignment of the molecules, and the associated eigenvalues measure the degree of alignment along these directions. Uniaxial states are described by $Q$-tensors with two equal eigenvalues, which can be put in the form
\begin{equation*}
Q=s\left(n\otimes n-\frac 13 I\right),\quad s\in\mathbb R,\; n\in\mathbb S^2.
\end{equation*}
Biaxial states correspond to the generic case of a $Q$-tensor with three distinct eigenvalues.

The configuration of a nematic material contained in a domain $\Omega\subset\mathbb R^3$ is described by a map $Q\colon \Omega \to \mathcal S_0$. At equilibrium it should minimize the free energy functional
\begin{equation}\label{F}
\mathcal F(Q) =\int_\Omega \left[ \frac L2 \abs{\nabla Q}^2 +f(Q) \right] dx + \mathcal F_s(Q).
\end{equation}
Here $\mathcal F_s(Q)$ is a surface energy term which depends on the type of anchoring (see  \eqref{Fs} below), and the bulk potential $f(Q)\geq 0$ is given by
\begin{equation}\label{potential}
f(Q)=-\frac a 2\,\tr(Q^2)-\frac b 3\,\tr(Q^3)+\frac c 4\,\tr(Q^2)^2 + C_0,
\end{equation}
for some material-dependent constants $a\geq 0$, $b,c>0$. The constant $C_0$ is chosen to ensure that $\min f =0$. The set of $Q\in\mathcal S_0$ minimizing the potential \eqref{potential} obviously plays a crucial role. It consists exactly of those $Q$-tensors which are uniaxial, with  fixed eigenvalues:
\begin{equation}
\mathcal U_* :=\left\lbrace f=0\right\rbrace =\left\lbrace s_*\left( n\otimes n-\frac 13 I\right)\colon n\in\mathbb S^2\right\rbrace,
\end{equation}
where $s_*=(b+\sqrt{b^2+24ac})/4c>0$.

We are interested here in the nematic configuration around a spherical particle:
\begin{equation*}
\Omega = \Omega_{r_0} :=\mathbb R^3\setminus\overline B_{r_0},
\end{equation*}
where $r_0>0$ is the particle radius. We impose uniform $\mathcal U_*$-valued conditions at infinity
\begin{equation}\label{condinfty}
\lim_{\abs{x}\to\infty} Q(x) =Q_\infty := s_* \left(e_z\otimes e_z -\frac 13 I\right),\qquad e_z=(0,0,1).
\end{equation}
At the particle surface, weak radial anchoring is enforced through the surface term $\mathcal F_s$ in the free energy functional \eqref{F}. This surface contribution is given by
\begin{equation}\label{Fs}
\mathcal F_s(Q)=\frac{W}{2} \int_{\partial B_{r_0}} \abs{Q_s-Q}^2\, dA,
\end{equation}
where $W>0$ is the anchoring strength, and $Q_s$ is the $\mathcal U_*$-valued radial map
\begin{equation}\label{Qs}
Q_s := s_*\left(e_r\otimes e_r-\frac 13 I\right),\qquad e_r=\frac{x}{\abs{x}}.
\end{equation}
Denoting by $\nu$ the exterior normal to $\Omega_{r_0}$, the corresponding boundary conditions are
\begin{equation}\label{weakanchor}
\frac{L}{W}\frac{\partial Q}{\partial\nu} = Q_s-Q\quad\text{for }|x|=r_0.
\end{equation}
We also include in this description the case of strong anchoring, corresponding to $W=+\infty$ and Dirichlet boundary conditions
\begin{equation}\label{stronganchor}
Q=Q_s\quad\text{for }|x|=r_0.
\end{equation}
In every case, the Euler-Lagrange equations
\begin{equation}\label{equilibrium}
L \Delta Q =\nabla f(Q) = -a Q -b\left( Q^2-\frac 13 \abs{Q}^2 I\right) +c \abs{Q}^2 Q,
\end{equation}
are satisfied in $\mathcal D'(\Omega_{r_0};\mathcal S_0)$ by any equilibrium configuration.

The existence of minimizers of the free energy functional \eqref{F} with the uniform far-field condition \eqref{condinfty} can be obtained if we replace the pointwise condition \eqref{condinfty} with the integrability condition
\begin{equation}\label{condinftyhardy}
\int_{\Omega_{r_0}} \frac{\abs{Q_\infty -Q}^2}{\abs{x}^2}\, dx <\infty.
\end{equation} 
The Euler-Lagrange equations \eqref{equilibrium} can then be used to see that the strong condition \eqref{condinfty} is in fact also satisfied. After establishing this existence result in Section~\ref{sec:existence},  we turn to studying the two asymptotics regimes of \enquote{small} or \enquote{large} colloid particle.

According to the numerical computations in \cite{stark99,ravnikzumer09}, small particles favor quadrupolar configurations with a defect ring, while large particles favor dipolar configurations with a point defect. In the present paper we obtain rigorous justifications of these observations. In the small particle regime we also provide exact information on the radius of the defect ring, for which the values computed in \cite{terentjev95,kuksenoketal96,lubenskyetal98,stark99,ravnikzumer09} did not agree.

We determine the size of the particle based on the ratio $r_0^2/L$, and consider the limits as the ratio tends to zero and to infinity. In either limiting case, the ratio $r_0W/L$, which gauges the anchoring at the particle surface, will also enter into the description of the limiting configuration. 

\paragraph*{The small particle limit.}
In Section~\ref{sec:small} we investigate the small particle regime, and prove:
\begin{thm}\label{thm:small}
Consider (for any $r_0$, $W$ or $L$) a map $Q$, finite-energy solution of \eqref{equilibrium}-\eqref{weakanchor}-\eqref{condinftyhardy}. Let $w\in (0,+\infty]$ be the effective \enquote{limiting anchoring strength}. Then, as
\begin{equation*}
\left(\frac{r_0^2}{L}, \frac{r_0W}{L}\right) \longrightarrow (0,w),
\end{equation*}
the rescaled maps $x\mapsto Q(r_0 x)$ converge to 
\begin{equation*}
Q_0=s_*\frac{w}{3+w}\frac{1}{r^3}\left(e_r\otimes e_r-\frac 13 I\right)+s_*(1-\frac{w}{1+w}\frac 1r )\left( e_z\otimes e_z-\frac 13 I\right),
\end{equation*}
 locally uniformly in $\overline \Omega_1$.
\end{thm}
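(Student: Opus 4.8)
The plan is to rescale the problem so that the particle has unit radius and to identify the limit as the unique solution of a linear (harmonic) boundary value problem. Set $\tilde Q(y):=Q(r_0 y)$ on $\Omega_1=\mathbb R^3\setminus\overline B_1$. Then \eqref{equilibrium} becomes $\Delta\tilde Q=(r_0^2/L)\,\nabla f(\tilde Q)$; since the exterior normal to $\Omega_{r_0}$ is $\nu=-e_r$ and $Q_s$ is scale invariant, \eqref{weakanchor} becomes $-(L/(r_0W))\,\partial_r\tilde Q=Q_s-\tilde Q$ on $\{\abs{y}=1\}$; and a change of variables turns \eqref{condinftyhardy} into $\int_{\Omega_1}\abs{Q_\infty-\tilde Q}^2/\abs{y}^2\,dy=H/r_0$, where $H:=\int_{\Omega_{r_0}}\abs{Q_\infty-Q}^2/\abs{x}^2\,dx$ is the original Hardy integral. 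Because $r_0^2/L\to0$ and $L/(r_0W)\to 1/w$, the formal limit problem reads $\Delta Q_0=0$ in $\Omega_1$, $\partial_r Q_0=w(Q_0-Q_s)$ on $\{\abs{y}=1\}$ (interpreted as the Dirichlet condition $Q_0=Q_s$ when $w=+\infty$), and $Q_0\to Q_\infty$ at infinity. That the announced $Q_0$ solves this problem is a direct verification, using that constants, $r^{-1}$, and $r^{-3}(e_r\otimes e_r-\tfrac13 I)$ are all harmonic, and that the degree-two tensor $Q_s$ carries no $l=0$ component.

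First I would record the uniform estimates. The maximum principle applied to $\abs{Q}^2$ (as in Section~\ref{sec:existence}) gives a bound $\norm{\tilde Q}_{L^\infty}\le C$ independent of the parameters, whence $\norm{\nabla f(\tilde Q)}_{L^\infty}\le C$ and the right-hand side of the rescaled equation is $O(r_0^2/L)\to0$ uniformly. Next, comparing the minimizer with the competitor $Q_\infty$ yields $\mathcal F(Q)\le\mathcal F(Q_\infty)\le CWr_0^2\le CLr_0$ (a suitable point-defect competitor handles the strong-anchoring case), so dividing the energy by $Lr_0$ shows that the rescaled Dirichlet energy $\int_{\Omega_1}\abs{\nabla\tilde Q}^2$, the surface term, and $(r_0^2/L)\int f(\tilde Q)$ are all uniformly bounded. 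Feeding the Dirichlet bound into the Hardy inequality on the exterior domain converts $H\le C\int_{\Omega_{r_0}}\abs{\nabla Q}^2\le Cr_0$ into the crucial uniform bound $\int_{\Omega_1}\abs{Q_\infty-\tilde Q}^2/\abs{y}^2\,dy\le C$.

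With the uniform $L^\infty$ bound and the vanishing right-hand side, interior and boundary (Robin) elliptic estimates give a uniform $C^{2,\alpha}_{loc}(\overline\Omega_1)$ bound, so along a subsequence $\tilde Q\to Q_0$ in $C^2_{loc}(\overline\Omega_1)$. Passing to the limit shows that $Q_0$ is harmonic and satisfies the limiting Robin (or Dirichlet) condition. The behavior at infinity is then recovered from the uniform Hardy bound: by Fatou, $\int_{\Omega_1}\abs{Q_\infty-Q_0}^2/\abs{y}^2\,dy<\infty$, and since $r^{-2}\cdot r^2$ is not integrable at infinity, finiteness forces the constant ($l=0$) part of $Q_0$ to equal $Q_\infty$, i.e. $Q_0\to Q_\infty$.

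It remains to prove that the limit problem has a unique solution, which upgrades the subsequential convergence to convergence of the whole family. Expanding a solution in spherical harmonics, each radial mode has the form $A_{lm}r^{l}+B_{lm}r^{-l-1}$; the Robin condition together with boundedness kills every mode with $l\ge1$ (the relation $-(l+1)=w$ being impossible) and determines the $l=2$ response to $Q_s$, producing exactly the coefficient $s_*\,w/(3+w)$. The delicate sector is $l=0$: boundedness alone leaves the one-parameter family $R_{00}(r)=B_{00}\,(r^{-1}-(1+w)/w)$, and it is precisely the decay encoded by the Hardy condition that forces $B_{00}=0$ and selects $Q_\infty$ as the far-field constant. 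This non-coercivity in the $l=0$ sector is the \emph{main obstacle}: boundedness of the rescaled maps cannot by itself fix the far-field constant of $Q_0$, so transferring the condition \eqref{condinfty} to the limit relies on the uniform Hardy estimate, which in turn is available only because minimality gives $\mathcal F(Q)\le CLr_0$. Once uniqueness is established, every subsequential limit equals the explicit $Q_0$, and the convergence holds locally uniformly on $\overline\Omega_1$ along the full family.
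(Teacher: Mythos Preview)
Your compactness-plus-uniqueness scheme is internally consistent, but it proves a weaker statement than the theorem. The hypothesis is that $Q$ is \emph{any} finite-energy solution of \eqref{equilibrium}--\eqref{weakanchor}--\eqref{condinftyhardy}; minimality is not assumed. Yet your argument relies on it at the crucial step: the bound $\mathcal F(Q)\le \mathcal F(Q_\infty)\le CLr_0$ comes from comparison with a competitor, and this is what yields the uniform control on $\int_{\Omega_1}|\widetilde Q-Q_\infty|^2/|y|^2\,dy$ that you (correctly) identify as the only way to pin down the $l=0$ constant of the limit. For an arbitrary critical point there is no such comparison, the uniform Hardy bound is not available, and you cannot exclude subsequential limits that differ from $Q_0$ by a nonzero multiple of $r^{-1}-(1+w)/w$ in the $l=0$ sector. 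So as written, your proof covers minimizers only.

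The paper sidesteps this by a direct estimate that uses only the equation and Lemma~\ref{lem:unifbound}. One subtracts the explicit harmonic map \emph{with the current parameter}, setting $\overline Q:=Q-Q_{0,W/L}$; then $\overline Q$ satisfies the \emph{homogeneous} Robin condition $(L/W)\partial_\nu\overline Q+\overline Q=0$ together with $\Delta\overline Q=L^{-1}\nabla f(Q)$. Since $\|\nabla f(Q)\|_\infty\le C$ by Lemma~\ref{lem:unifbound}, the Bethuel--Brezis--H\'elein interpolation inequality (in the oblique-derivative version) gives
\[
\|\nabla\overline Q\|_\infty^2 \le C\,\|\Delta\overline Q\|_\infty\,\|\overline Q\|_\infty \le \frac{C}{L},
\]
hence $\|\nabla\overline Q\|_\infty\le C/\sqrt L$. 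The homogeneous Robin condition then forces $|\overline Q|\le (L/W)\|\nabla\overline Q\|_\infty\le C/\sqrt L$ on $\partial B_1$, and integrating the gradient bound from the boundary yields $\|\overline Q\|_{L^\infty(K)}\le C\,\mathrm{diam}(K)/\sqrt L$. Finally $\|Q_{0,W/L}-Q_{0,w}\|_{C^k}\to 0$. The point is that, by loading the exact boundary and far-field behavior into the explicit term $Q_{0,W/L}$, one never needs a uniform integral estimate at infinity; only the pointwise bound of Lemma~\ref{lem:unifbound} is used. This is what makes the result hold for all solutions, not just minimizers, and it also gives an explicit rate $O(L^{-1/2})$ that a pure compactness argument cannot provide.
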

An interesting feature of Theorem~\ref{thm:small} is the explicit form of the limit: it provides a very precise description of the quadrupolar configurations. The Saturn ring defect appears as a discontinuity in the principal eigenvector of $Q(x)$, the $Q$-tensor passing through a uniaxial state as eigenvalue branches cross via an \enquote{eigenvalue exchange} mechanism \cite{ravnikzumer09}.  The ratio of the ring radius to the particle size is thus found to be, for $w>\sqrt 3$, the solution $r>1$ of
\begin{equation*}
r^3-\frac{w}{1+w}r^2-\frac{w}{3+w} = 0.
\end{equation*}
For the strong anchoring $w=\infty$, its value is $r\approx 1.47$. As the anchoring strength decreases, the ring shrinks until it becomes a surface ring for $w=\sqrt 3$. At very weak anchoring $w<\sqrt 3$ there is no defect ring anymore. (See Figure~\ref{pictures} in Section~3 for the ring location at various values of $w$.)  This description is consistent with  \cite{terentjev95,kuksenoketal96,lubenskyetal98,stark99,ravnikzumer09}, with the significant improvement of providing exact values for the relevant quantities.  

Away from the ring defect, the limiting map $Q_0(x)$ is everywhere biaxial in $\Omega_1$:  the small particle limit $r_0^2/L \to 0$ does {\em not} correspond to the unit director Oseen-Frank model.  Indeed, it is well known (see \cite{schoenuhlenbeck84}, and \cite{hkl88} for more general energy functionals) that minimizing $\mathbb{S}^2$-valued maps cannot have line defects.

\paragraph*{The large particle limit.}
The large particle regime  is more delicate to analyse. We restrict ourselves to minimizers of the free energy, and to strong anchoring -- that is, $W=\infty$. For the rescaled maps $x\mapsto Q(r_0 x)$, the regime $r_0^2/L\gg 1$ corresponds to the vanishing elastic constant limit studied in \cite{majumdarzarnescu10,nguyenzarnescu13}. There, the authors prove convergence to a $\mathcal U_*$-valued map whose director is an $\mathbb S^2$-valued minimizing harmonic map. It is well-known that such a map has a discrete set of singularities \cite{schoenuhlenbeck84}, and that these defects carry topological degrees $\pm 1$ \cite{breziscoronlieb86}.  In particular, the results of \cite{schoenuhlenbeck84, hkl86} ensure that a ``Saturn ring'' defect cannot be observed in the large particle limit, regardless of the anchoring condition on the particle surface, and thus very different behavior may be expected in the large particle regime than was observed for small particles.

Since the strong radial anchoring imposes a degree $+1$ near the particle, while the uniform far-field condition imposes a zero degree at infinity, there must be at least one point defect of degree $-1$. However the number of defects of a minimizing map does not necessarily correspond to the minimal number of defects required by the topology \cite{hardtlin86}. 
In our case we expect that there is exactly one defect, as predicted by \cite{lubenskyetal98,stark99,ravnikzumer09}. Since determining the exact number of defects is a very difficult question in general, we restrict ourselves to {\em axially symmetric} configurations: we impose invariance under any rotation of vertical axis, and that $e_\theta$ (horizontal unit vector orthogonal to the radial direction) be everywhere an eigenvector of the $Q$-tensor. This natural symmetry assumption seems to be supported by the numerical pictures in \cite{ravnikzumer09}.

In the limit we will therefore obtain an axially symmetric $\mathbb S^2$-valued harmonic map. Such maps have been studied in \cite{hardtkinderlehrerlin90,hardtlinpoon92}. They are analytic away from a discrete set of defects on the $z$-axis. For very particular symmetric boundary data, it can be deduced from rearrangement inequalities that the number of defects matches the topological degree \cite[Theorem~5.1]{hardtkinderlehrerlin90}. This result does not apply to our case, but -- using different arguments -- we nevertheless manage to show that there is exactly one defect, thus justifying the dipole configuration predicted by \cite{lubenskyetal98,stark99,ravnikzumer09}. More precisely, in Section~\ref{sec:large} we prove:
\begin{thm}\label{thm:large}
Let $Q$ minimize the free energy \eqref{F} among axially symmetric maps satisfying the boundary conditions \eqref{stronganchor}-\eqref{condinftyhardy}. Then, as $r_0^2/L$ goes to $+\infty$, a subsequence of the rescaled maps $x\mapsto Q(r_0 x)$ converges to a map
\begin{equation*}
Q_*(x)=s_* ( n(x)\otimes n(x)-I/3),
\end{equation*}
locally uniformly in $\overline\Omega_1\setminus\lbrace p_0\rbrace$.
Here $n$ minimizes the Dirichlet energy in $\Omega_1$, among axially symmetric $\mathbb S^2$-valued maps  satisfying the boundary conditions
\begin{equation*}
n=e_r\;\text{on }\partial B_1,\qquad\text{and } \int_{\Omega_1} \frac{(n_1)^2 +(n_2)^2}{\abs{x}^2}\, dx <\infty,
\end{equation*}
and $n$ is analytic away from \textbf{exactly one point defect} $p_0$, located on the axis of symmetry.
\end{thm}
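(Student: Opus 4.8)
The plan is to split the argument into a convergence part, which identifies the limit as a minimizing axially symmetric harmonic map, and a defect-counting part, which is where the real work lies. After the rescaling $y=x/r_0$, setting $\eps^2=L/r_0^2$, the normalized energy of $\tilde Q(y):=Q(r_0y)$ takes the Ginzburg--Landau form $\tfrac12\int_{\Omega_1}\abs{\nabla\tilde Q}^2+\eps^{-2}\int_{\Omega_1}f(\tilde Q)$ subject to the Dirichlet condition $\tilde Q=Q_s$ (i.e. $n=e_r$) on $\partial B_1$, so that $r_0^2/L\to+\infty$ is exactly the vanishing elastic constant limit. First I would record a uniform energy bound by exhibiting a finite-energy axially symmetric competitor with a single point defect (an explicit $\mathcal U_*$-valued map with the prescribed boundary behaviour). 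Then, following \cite{majumdarzarnescu10,nguyenzarnescu13}, the potential forces $\tilde Q$ towards $\mathcal U_*$, and a subsequence converges strongly in $H^1_{\mathrm{loc}}$, and locally uniformly away from a discrete singular set, to $Q_*=s_*(n\otimes n-I/3)$ with $n$ a harmonic map; the $\eps$-regularity for the system yields the locally uniform convergence. A lower semicontinuity together with a recovery-sequence argument, with all competitors taken axially symmetric, then shows that $n$ minimizes the Dirichlet energy in the stated symmetric class.

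Two preliminary points must be settled before counting defects. The natural limit lives in $\mathcal U_*\cong\mathbb{RP}^2$, so one obtains an $\mathbb{RP}^2$-valued line field which must be lifted to an $\mathbb S^2$-valued map $n$; under axial symmetry together with the boundary data $n=e_r$ on $\partial B_1$ and $n\to e_z$ at infinity this orientation is globally well-defined, and the symmetry passes to the limit, so $n$ is axially symmetric with $e_\theta$ an eigenvector. Next I invoke the regularity theory for axially symmetric minimizing harmonic maps \cite{hardtkinderlehrerlin90,hardtlinpoon92}: $n$ is analytic away from finitely many point singularities, all on the axis, each of degree $\pm1$. It is then convenient to pass to the scalar description $n=\cos\beta\,e_z+\sin\beta\,e_\rho$ in the meridian half-plane, where $e_\rho$ is the cylindrical radial direction; finiteness of the energy forces $\sin\beta=0$ on the axis away from the defects, so on each axis component $n=\pm e_z$ and the defects are exactly the transition points.

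With this reduction the topological lower bound is immediate. Along the upper axis the boundary conditions give $n=e_z$ both at the north pole of $\partial B_1$ and at infinity, an even number of transitions; along the lower axis they give $n=-e_z$ at the south pole and $n=e_z$ at infinity, forcing an odd, hence at least one, transition. Equivalently, the radial datum has degree $+1$ on $\partial B_1$ while the far field has degree $0$, so the defect degrees sum to $-1$ and at least one degree $-1$ defect (on the lower axis) is present.

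The crux, and the main obstacle, is to rule out \emph{further} defects. The rearrangement technique of \cite[Theorem~5.1]{hardtkinderlehrerlin90} is unavailable here because the far-field condition breaks the up--down symmetry, so I would argue by energy comparison directly on the reduced functional $\pi\int(\abs{\nabla\beta}^2+\rho^{-2}\sin^2\beta)\,\rho\,d\rho\,dz$. The strategy is to show that any admissible $\beta$ carrying an extra adjacent pair of transitions (for instance a segment where $n=-e_z$ flanked by two defects) can be replaced by a competitor with those two transitions removed and strictly smaller energy; since the number of defects is odd and at least one, iterating leaves exactly one defect on the lower axis. The surgery is performed in a meridian neighbourhood of the offending segment by folding $\beta$ back onto the defect-free branch, replacing in the model case a profile that sweeps $0\to\pi\to0$ by one that stays near $0$, exploiting the penalizing weight $\rho^{-2}\sin^2\beta$ which charges the unnecessary excursion away from the axis. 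Making this rigorous is delicate: the competitor must remain axially symmetric and admissible, must match $\beta$ outside the surgery region, and one must check that the gain is not offset near the axis, where both the area weight $\rho$ and the singular term degenerate. I expect this energy-decreasing surgery, together with the quantized cost (at least $4\pi$) carried by each defect, which prevents accumulation, to be the technical heart of the proof; uniqueness of the defect's location then follows from the single transition forced on the lower axis.
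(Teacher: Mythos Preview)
Your convergence argument and the lower bound (at least one defect via degree) are essentially the same as the paper's. The substantive divergence is in the proof that there is \emph{at most} one defect.

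Your proposed route is a local energy-decreasing surgery: given an extra adjacent pair of transitions on the axis, build a competitor that ``folds $\beta$ back'' in a meridian neighbourhood and strictly lowers the energy. You yourself flag this as delicate, and indeed the proposal stops short of an actual construction; matching the competitor to $\beta$ outside the surgery region while controlling both the gradient and the singular weight $\rho^{-2}\sin^2\beta$ near the axis is nontrivial, and it is not clear your sketch can be completed. The paper avoids this difficulty entirely with a global, soft argument. Writing $n=\sin\psi\,e_\rho+\cos\psi\,e_z$, it shows that the sets $X_\pm=\{\pm(\psi-\pi/2)>0\}$ in the meridian half-plane are \emph{connected}. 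The proof is a one-line reflection trick: the competitor $\tilde\psi$ that equals $\psi$ on the component $\omega_+$ of $X_+$ touching the boundary arc, and equals $\min(\psi,\pi-\psi)$ elsewhere, is admissible with $E(\tilde\psi)=E(\psi)$, hence is itself a minimizer, hence analytic off the axis; since $\tilde\psi=\psi$ on the nonempty open set $X_-$, analyticity propagates the identity $\psi=\min(\psi,\pi-\psi)$ to all of $\Omega_{cyl}\setminus\overline{\omega_+}$, i.e.\ $X_+=\omega_+$. With $X_\pm$ connected, two defects would produce crossing paths in $X_+$ and $X_-$, a contradiction. This replaces your quantitative surgery by an equal-energy global reflection plus unique continuation, and requires no local estimates at all.

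A minor point: your parity count assumes $n\to e_z$ at infinity on both half-axes, but the admissible class only imposes $\int(n_1^2+n_2^2)/|x|^2<\infty$, which does not by itself fix the sign of the limit; correspondingly, the paper does not attempt to locate the defect on a specific half-axis.
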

The core of Theorem~\ref{thm:large} is verifying that the minimizing harmonic map $n$ admits at most one defect. We achieve this by investigating the topology of the sets $\lbrace n_3>0\rbrace$ and $\lbrace n_3 <0\rbrace$ where $n$ points \enquote{more upward} or \enquote{more downward}. Using basic energy comparison arguments and the analyticity of minimizers away from the $z$-axis, we show that these sets are connected. Merging this with the observation that defects correspond to \enquote{jumps} between upward- and downward-pointing $n$, we conclude that there cannot be more than one defect.

We note that there are very few cases in which the number of defects is actually known to match the topological degree. This is true in a ball with radial Dirichlet boundary conditions, because then the energy of the radial map can be explicitly computed and seen to coincide with a general lower bound \cite{breziscoronlieb86}, and this is true also for geometries close enough to the radial one \cite{hardtlin88}.  We do expect that our minimizers in the axially symmetric class are actually minimizers in the general (nonsymmetric) case, but this remains an open question.

The plan of the paper is as follows. In Section~\ref{sec:existence} we prove the existence of minimizers and some basic properties. In Section~\ref{sec:small} we investigate the small particle regime and the quadrupolar \enquote{Saturn ring} configurations. In Section~\ref{sec:large} we study the large particle regime and the associated axially symmetric harmonic map problem.

\paragraph{Acknowledgements:} Part of this work was carried out while XL was visiting McMaster University with a \enquote{Programme Avenir Lyon Saint-Etienne} doctoral mobility scholarship. He thanks the Mathematics and Statistics Department of McMaster University for their hospitality, and his doctoral advisor P.~Mironescu for his support and advice.

\paragraph{Notations:} We will use cylindrical coordinates $(\rho,\theta,z)$ defined by
\begin{equation*}
x_1=\rho\cos\theta,\quad x_2=\rho\sin\theta,\quad x_3=z,
\end{equation*}
and the associated orthonormal frame $(e_\rho,e_\theta,e_z)$, where
\begin{equation*}
e_\rho=(\cos\theta,\sin\theta,0),\quad e_\theta =(-\sin\theta,\cos\theta,0).
\end{equation*}
We will also use spherical coordinates $(r,\theta,\varphi)$ with $(r,\varphi)$ defined by
\begin{equation*}
\rho=r\sin\varphi,\quad z=r\cos\varphi.
\end{equation*}

\section{Existence and first properties of minimizers}\label{sec:existence}

We start by remarking that, with the rescaling $\widetilde Q(x):=Q(r_0x)$, we have:
\begin{equation*}
\frac{1}{r_0^3}\mathcal F(Q) = \int_{\Omega_1}\left[\frac{L}{2r_0^2}|\nabla\widetilde Q|^2 +f(\widetilde Q)\right]\, dx + \frac{W}{2r_0} \int_{\partial B_1}|Q_s-\widetilde Q|^2 dA.
\end{equation*}
We will always work in this rescaled setting, and \textbf{we assume from now on that $\bm{r_0=1}$}, and consider the domain
\begin{equation*}
\Omega :=\Omega_1 =\mathbb R^3\setminus \overline B,
\end{equation*}
where $B=B_1$ is the ball of radius 1. The size of the particle is then encoded in the elastic constant $L$:  $L\to\infty$ represents the small particle limit, while $L\to 0$ simulates very large particle radii.

As mentioned in the introduction, an appropriate functional setting to establish the existence of minimizers is the affine Hilbert space
\begin{equation}\label{Hinfty}
\begin{aligned}
&\mathcal H_\infty :=Q_\infty +\mathcal H,\\
&\mathcal H :=\left\lbrace Q\in H^1_{loc}(\Omega;\mathcal S_0)\colon \int_\Omega \abs{\nabla Q}^2 + \int_\Omega \frac{\abs{Q}^2}{\abs{x}^2} <\infty \right\rbrace.
\end{aligned}
\end{equation}
Note that the free energy functional \eqref{F} is not everywhere finite on the space $\mathcal H_\infty$, since the potential term $f(Q)\geq 0$ may very well not be integrable in $\Omega$. However, since $f(Q_\infty)=0$, we do know that 
\begin{equation*}
\inf_{\mathcal H_\infty} \mathcal F <\infty.
\end{equation*}
In fact, we show that the infimum is attained, and that the minimizer has a limit at infinity:
\begin{prop}\label{prop:existence}
Let $L>0$ and $W\in [0,+\infty]$. Then there exists $Q\in \mathcal H_\infty$ such that
\begin{equation*}
\mathcal F(Q) = \inf_{\mathcal H_\infty}\mathcal F.
\end{equation*}
Moreover, the far-field condition holds in the strong sense \eqref{condinfty}, and this is true for any solution of the Euler-Lagrange equation \eqref{equilibrium} in $\mathcal H_\infty$.
\end{prop}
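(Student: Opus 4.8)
The plan is to produce the minimizer by the direct method and then to upgrade the integral condition \eqref{condinftyhardy} to the pointwise limit \eqref{condinfty} by elliptic regularity. For existence I would take a minimizing sequence \(Q_n=Q_\infty+U_n\) with \(U_n\in\mathcal H\). Since \(f\ge 0\) and \(\mathcal F_s\ge 0\), the bound \(\mathcal F(Q_n)\le C\) controls \(\int_\Omega|\nabla U_n|^2=\int_\Omega|\nabla Q_n|^2\le 2C/L\); invoking a standard Hardy inequality on the exterior domain, \(\int_\Omega|U|^2/|x|^2\le C_H\int_\Omega|\nabla U|^2\) for \(U\in\mathcal H\), one gets that \(\{U_n\}\) is bounded in \(\mathcal H\). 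Passing to a subsequence with \(U_n\rightharpoonup U\) in \(\mathcal H\), hence \(Q_n\rightharpoonup Q:=Q_\infty+U\), I would take the limit termwise: the Dirichlet term is weakly lower semicontinuous; after extracting strong \(L^2_{loc}\) (Rellich) and a.e.\ convergence, Fatou's lemma applies to the nonnegative potential \(\int_\Omega f(Q_n)\); and compactness of the trace on \(\partial B\) gives convergence of \(\mathcal F_s\). This yields \(\mathcal F(Q)\le\liminf\mathcal F(Q_n)=\inf_{\mathcal H_\infty}\mathcal F\), so \(Q\) is a minimizer.

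For the far-field statement it suffices to show that any \(Q\in\mathcal H_\infty\) solving \eqref{equilibrium} satisfies \(Q(x)\to Q_\infty\). First I would prove a uniform bound \(|Q|\le M\): since \(L\,\Delta|Q|^2=2\,Q:\nabla f(Q)+2L|\nabla Q|^2\) and \(Q:\nabla f(Q)=-a|Q|^2-b\,\tr(Q^3)+c|Q|^4>0\) once \(|Q|\) exceeds an explicit threshold, the function \(|Q|^2\) is subharmonic where it is large; the Robin condition \eqref{weakanchor} (or \eqref{stronganchor}) on \(\partial B\) together with the finiteness of \eqref{condinftyhardy} then excludes large values by the maximum principle. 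With \(Q\) bounded, \(\nabla f(Q)\) is bounded, so interior elliptic estimates on unit balls \(B_1(x_0)\) give a uniform bound \(\|Q\|_{C^{1,\alpha}(B_{1/2}(x_0))}\le C\) independent of \(x_0\); in particular \(u:=Q-Q_\infty\) is uniformly Lipschitz near infinity.

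The decay itself I would read off from the two integral conditions. In spherical coordinates \(|x|^{-2}\,dx=dr\,d\omega\), so with \(h(r):=\|u(r\,\cdot)\|_{L^2(\mathbb S^2)}\) condition \eqref{condinftyhardy} reads \(\int_1^\infty h(r)^2\,dr<\infty\), while \(\int_\Omega|\nabla u|^2\ge\int_1^\infty\|\partial_r u(r\,\cdot)\|_{L^2(\mathbb S^2)}^2\,r^2\,dr<\infty\) and \(|h'(r)|\le\|\partial_r u(r\,\cdot)\|_{L^2(\mathbb S^2)}\). Hence \(h\in H^1(1,\infty)\) and \(h(r)\to 0\), i.e.\ \(u(r\,\cdot)\to 0\) in \(L^2(\mathbb S^2)\); the uniform Lipschitz bound then promotes this to \(\|u(r\,\cdot)\|_{L^\infty(\mathbb S^2)}\to 0\) (a uniformly Lipschitz family on the compact manifold \(\mathbb S^2\) with vanishing \(L^2\) norm has vanishing sup norm), which is exactly \eqref{condinfty}. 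As the argument uses only \(Q\in\mathcal H_\infty\) and the equation, it applies both to the minimizer and to any solution.

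The existence half is routine; the real work is the far-field statement, and within it the main obstacle is the pair of uniform \(L^\infty\) and \(C^1\) estimates at infinity that feed the decay argument. The maximum-principle proof of \(|Q|\le M\) must be handled with care because of the Robin condition on \(\partial B\) (where the exterior normal points inward), and the elliptic regularity must be made uniform in the base point as \(x_0\to\infty\). Once these bounds are secured, the reduction to the one-dimensional \(H^1\) statement for \(h(r)\) is the clean step that converts finite weighted energy into genuine pointwise convergence, so that \eqref{condinftyhardy} upgrades to \eqref{condinfty}.
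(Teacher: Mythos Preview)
Your overall architecture matches the paper's: existence by the direct method, then for the far-field an $L^\infty$ bound on $Q$, a uniform gradient bound via elliptic regularity, and finally uniform continuity plus an integral condition forcing $Q-Q_\infty\to 0$. The existence half is correct and essentially identical to the paper's.

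There is, however, a genuine gap in the last step. The family $\omega\mapsto u(r\omega)$ is \emph{not} uniformly Lipschitz on the unit sphere: by the chain rule its Lipschitz constant is $r\,\norm{\nabla u}_{L^\infty}$, which blows up as $r\to\infty$, so the interpolation you invoke does not apply. In fact the weighted condition \eqref{condinftyhardy} together with uniform continuity is too weak for pointwise decay, because the weight $\abs{x}^{-2}$ discounts far-away regions; the relevant interpolation on $\mathbb S^2$ gives only $\norm{u(r\cdot)}_{L^\infty}^2\le C\,r\,h(r)$, and $h\in H^1(1,\infty)$ does not force $r\,h(r)\to 0$. The paper avoids this by using instead the unweighted Sobolev embedding $\nabla u\in L^2(\Omega)\Rightarrow u\in L^6(\Omega)$, after which the standard fact that a uniformly continuous $L^p$ function with $p<\infty$ vanishes at infinity gives \eqref{condinfty} immediately; substituting this for your $h$-argument repairs the proof. (A secondary point: for the $L^\infty$ bound the paper uses a test-function argument, Lemma~\ref{lem:unifbound}, rather than a bare maximum principle; on the unbounded domain your subharmonicity sketch still needs to rule out a supremum attained at infinity.)
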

\begin{rem}
The case $W=+\infty$ will be understood as the strong anchoring case: it amounts to considering only maps $Q\in \mathcal H_\infty$ which satisfy the Dirichlet boundary condition \eqref{stronganchor} in the sense of traces.
\end{rem}
\begin{proof}[Proof of Proposition~\ref{prop:existence}:]
Existence follows from the direct method of the calculus of variations. Thanks to Hardy's inequality, any minimizing sequence $(Q_n)$ is bounded in $\mathcal H_\infty$ and admits (up to taking a subsequence) a weak limit $Q\in \mathcal H_\infty$. We may also assume that the convergence $Q_n\to Q$ holds almost everywhere. Convexity and Fatou's lemma allow us to conclude that $\mathcal F(Q)\leq \liminf \mathcal F(Q_n)=\inf \mathcal F$.

The limit at infinity follows from estimates for solutions of the Euler-Lagrange equation \eqref{equilibrium}. From Lemma~\ref{lem:unifbound} below  we know that $Q\in L^\infty(\Omega)$, and \eqref{equilibrium} readily implies that $\Delta Q\in L^\infty(\Omega)$.
Using standard elliptic estimates, we deduce that $\nabla Q\in L^\infty(\Omega)$, so that $Q$ is uniformly continuous. Since on the other hand Sobolev inequality implies that $\abs{Q-Q_\infty}$ belongs to $L^6(\Omega)$, we conclude that $\abs{Q(x)-Q_\infty}$ converges to zero as $\abs{x}$ goes to $+\infty$.
\end{proof}

In the proof of Proposition~\ref{prop:existence} we used the following $L^\infty$ bound for solutions of \eqref{equilibrium}, related to the growth of the potential $f(Q)$.

\begin{lem}\label{lem:unifbound}
If $Q\in \mathcal H_\infty$ solves \eqref{equilibrium}, it holds
\begin{equation*}
\norm{Q}_{L^\infty(\Omega)}\leq q_0,
\end{equation*}
for some $q_0>0$ that depends only on $a$, $b$ and $c$ (but not on $L$ and $W$).
\end{lem}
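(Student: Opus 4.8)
The plan is to control $u:=\tr(Q^2)=\abs{Q}^2$ by showing it is a subsolution of $\Delta u=0$ wherever it is large, and then ruling out large values by an energy/maximum-principle argument. Starting from \eqref{equilibrium}, the Bochner-type identity $\Delta u=2\tr(Q\Delta Q)+2\abs{\nabla Q}^2$ together with $\Delta Q=\tfrac1L\nabla f(Q)$ gives, after using $\tr(Q)=0$ to kill the $\tfrac b3\abs{Q}^2I$ term,
\begin{equation*}
\Delta u=\frac 2L\bigl(c\abs{Q}^4-b\,\tr(Q^3)-a\abs{Q}^2\bigr)+2\abs{\nabla Q}^2.
\end{equation*}
The sharp algebraic inequality $\abs{\tr(Q^3)}\le \tfrac{1}{\sqrt6}\abs{Q}^3$, valid for all symmetric traceless $Q\in\mathcal S_0$, then yields $\Delta u\ge \tfrac2L\bigl(c\abs{Q}^4-\tfrac{b}{\sqrt6}\abs{Q}^3-a\abs{Q}^2\bigr)$, and the bracket is strictly positive once $\abs{Q}\ge q_1$, where $q_1=q_1(a,b,c)$ is the largest root of $c t^2-\tfrac{b}{\sqrt6}t-a$. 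Thus $u$ is a strict subsolution on $\lbrace u>q_1^2\rbrace$, with the threshold depending only on $a,b,c$ as required.

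I would then set $q_0:=\max\bigl(q_1,\sqrt{2/3}\,s_*\bigr)$, noting $\abs{Q_\infty}^2=\abs{Q_s}^2=\tfrac23 s_*^2\le q_0^2$, and prove $u\le q_0^2$ by testing the weak Euler--Lagrange equation with $\Phi:=Q\,\eta$, where $\eta:=(u-q_0^2)_+\ge0$. Integration by parts turns the elastic contribution into $L\int_\Omega\bigl(\eta\abs{\nabla Q}^2+\tfrac12\abs{\nabla\eta}^2\bigr)$ (using $\nabla\eta=\nabla u$ on $\lbrace u>q_0^2\rbrace$), which is nonnegative; the potential contribution is $\int_\Omega\eta\bigl(c\abs{Q}^4-b\,\tr(Q^3)-a\abs{Q}^2\bigr)$, which by the previous paragraph is nonnegative on the support of $\eta$ since there $\abs{Q}>q_0\ge q_1$. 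The natural boundary term from \eqref{Fs} is $W\int_{\partial B}\bigl(\abs{Q}^2-\tr(Q_sQ)\bigr)\eta$; by Cauchy--Schwarz $\abs{Q}^2-\tr(Q_sQ)\ge\abs{Q}\bigl(\abs{Q}-\abs{Q_s}\bigr)>0$ wherever $\abs{Q}>q_0\ge\abs{Q_s}$, so this term is also nonnegative. Since the three nonnegative contributions sum to zero, each vanishes; the strictly positive integrand forces $\lvert\lbrace\eta>0\rbrace\rvert=0$, i.e. $\abs{Q}\le q_0$ a.e. In the strong anchoring case $W=+\infty$ the boundary term is simply absent because $Q=Q_s$ gives $\eta\equiv0$ on $\partial B$.

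The boundary computation also admits the intuitive maximum-principle reading: at a boundary maximum of $u$ one would have $\partial_\nu u\ge0$, whereas the Robin condition \eqref{weakanchor} gives $\partial_\nu u=\tfrac{2W}{L}\bigl(\tr(Q_sQ)-\abs{Q}^2\bigr)<0$ as soon as $\abs{Q}>\abs{Q_s}$, a contradiction; in the interior a maximum with $u>q_1^2$ contradicts $\Delta u>0$. I would nonetheless favor the weak/energy version above, since a priori $Q$ is only an $H^1_{loc}$ solution.

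The main obstacle is the \emph{admissibility of the test function} $\Phi=Q\,\eta$ on the unbounded domain $\Omega$: a priori $Q\notin L^\infty$ — that is precisely the conclusion — so one cannot directly insert $\Phi$. I would circumvent this by truncating, replacing $\eta$ by $\eta_M:=\min(\eta,M)$ and inserting a cutoff $\chi_R$ supported in $\lbrace\abs{x}<R\rbrace$, deriving the inequality for $(Q\,\eta_M\chi_R)$ and then letting $M,R\to\infty$. The finite-energy membership $Q\in\mathcal H_\infty$ — which provides $\nabla Q\in L^2(\Omega)$, $\abs{Q}/\abs{x}\in L^2(\Omega)$, and $Q-Q_\infty\in L^6(\Omega)$ by Sobolev embedding — is exactly what is needed to show the truncation and cutoff errors vanish in the limit, so that the balance of the three nonnegative terms survives and yields $\eta\equiv0$.
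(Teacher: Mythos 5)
Your overall strategy coincides with the paper's: a Stampacchia-type truncation argument, testing the weak Euler--Lagrange equation against $Q$ times a truncated, cut-off version of $(\abs{Q}^2-q_0^2)_+$, with the three resulting contributions (elastic, bulk, boundary) each shown to have a favorable sign. The algebra is correct: $\nabla f(Q)\cdot Q=-a\abs{Q}^2-b\tr(Q^3)+c\abs{Q}^4\geq 0$ for $\abs{Q}$ large via $\abs{\tr(Q^3)}\leq\abs{Q}^3/\sqrt 6$, and the Robin boundary term has the right sign once $q_0\geq\abs{Q_s}$. The truncation $\eta_M=\min(\eta,M)$ also correctly makes the test function admissible locally.

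There is, however, a genuine gap at the final step, where you remove the cutoff $\chi_R$. You assert that $Q\in\mathcal H_\infty$ gives $\abs{Q}/\abs{x}\in L^2(\Omega)$; this is false: only $\abs{Q-Q_\infty}/\abs{x}\in L^2(\Omega)$ holds, since $Q_\infty\neq 0$ and $\int_\Omega\abs{x}^{-2}\,dx=\infty$. This matters because the cutoff error in your scheme is (up to constants) $\frac{M}{R}\int_{R\leq\abs{x}\leq 2R}\abs{Q}\,\abs{\nabla Q}$, and the naive Cauchy--Schwarz bound yields $M\,R^{1/2}\norm{\nabla Q}_{L^2(\abs{x}\geq R)}$, which need not vanish as $R\to\infty$. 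The paper sidesteps this by working throughout with $\widetilde Q:=Q-Q_\infty$ and the truncation $\min((\abs{\widetilde Q}^2-\tilde q_0^2)_+,M)$, so the error term is controlled by $\norm{\widetilde Q/\abs{x}}_{L^2(\abs{x}\geq R)}\norm{\nabla\widetilde Q}_{L^2(\abs{x}\geq R)}\to 0$; the sign conditions then become $\nabla f(Q_\infty+\widetilde Q)\cdot\widetilde Q\geq 0$ and $\abs{\widetilde Q}^2-\widetilde Q\cdot(Q_s-Q_\infty)\geq 0$ for $\abs{\widetilde Q}\geq\tilde q_0$, which are just as elementary as yours. Your version could be salvaged by observing that the integrand is supported in $\lbrace\abs{Q}>q_0\rbrace$, where (after enlarging $q_0$ so that $q_0>\abs{Q_\infty}$ strictly) one has $\abs{Q}\leq C\abs{Q-Q_\infty}$, thereby restoring the Hardy control --- but this observation is absent from your write-up, and it is precisely the point at which the argument as written fails.
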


\begin{proof}
Let $\widetilde Q :=Q-Q_\infty$, so that $\widetilde Q\in \mathcal H$ solves
\begin{equation}\label{eqtilde}
L\Delta \widetilde Q = \nabla f(Q_\infty + \widetilde Q).
\end{equation}
We may use as a test function in \eqref{eqtilde} any function $\Psi\in \mathcal H$ with compact support in $\overline\Omega$. Let us consider a test function
\begin{equation*}
\Psi=V\widetilde Q,\quad\text{for some }V\geq 0\text{ with compact support in }\overline\Omega.
\end{equation*}
Multiplying \eqref{eqtilde} by $\Psi$ we find
\begin{equation}\label{eqtilde2}
L\int_\Omega \nabla \widetilde Q\cdot\nabla\Psi = -\int_\Omega V \nabla f(Q_\infty + \widetilde Q)\cdot \widetilde Q + \int_{\partial\Omega}V\widetilde Q\cdot\frac{\partial \widetilde Q}{\partial\nu}.
\end{equation}
Clearly there exists $\tilde q_0=\tilde q_0(a,b,c)>0$ such that 
\begin{gather*}
\nabla f(Q_\infty + \widetilde Q)\cdot \widetilde Q \geq 0 \quad\text{for }|\widetilde Q|\geq \tilde q_0,\qquad
\text{and}\quad\abs{Q_s-Q_\infty}\leq \tilde q_0.
\end{gather*}

Now let us take $V$ of the form
\begin{equation}\label{auxV}
V=U\varphi,\quad U=\min\left( (|\widetilde Q|^2-\tilde q_0^2)_+,M \right),\quad 0\leq \varphi\in C_c^\infty(\overline\Omega).
\end{equation}
Note that $V$ is non-negative and  supported inside the set $\lbrace |\widetilde Q|\geq \tilde q_0\rbrace$.

Thanks to the choice of $\tilde q_0$, both terms in the right-hand side of \eqref{eqtilde2} are non-positive: for the first term this is clear, and the second term is zero in the case of strong anchoring \eqref{stronganchor} and non-positive in the case of weak anchoring \eqref{weakanchor} because $|\widetilde Q|^2-\widetilde Q\cdot (Q_s-Q_\infty)\geq 0$ for $|\widetilde Q|\geq \tilde q_0$. Thus we obtain
\begin{equation*}
L\int_\Omega \nabla \widetilde Q\cdot\nabla\Psi \leq 0,
\end{equation*}
i.e.
\begin{equation*}
\int_\Omega \varphi \left( U|\nabla \widetilde Q|^2 +\frac 12 \abs{\nabla U}^2\right) \leq -\int_\Omega U \widetilde Q \cdot \nabla \widetilde Q \cdot \nabla \varphi.
\end{equation*}
Next we take $\varphi=\varphi_R$ such that
\begin{equation*}
\varphi_R(x)=\begin{cases}
1 & \text{ for }\abs{x}\leq R,\\
0 & \text{ for }\abs{x}\geq 2R,
\end{cases}
\qquad\text{and }\abs{\nabla \varphi_R(x) }\leq \frac{C}{\abs{x}},
\end{equation*}
for some constant $C>0$ independent of $R$. We obtain
\begin{equation*}
\int_\Omega \varphi_R \left( U|\nabla \widetilde Q|^2 +\frac 12 \abs{\nabla U}^2\right) \leq MC \|\nabla \widetilde Q\|_{L^2(\abs{x}\geq R)}\|\widetilde Q/r\|_{L^2(\abs{x}\geq R)}.
\end{equation*}
Since $\widetilde Q\in \mathcal H$, the right hand side converges to zero as $R$ goes to $+\infty$ and it holds
\begin{equation*}
\int_\Omega \left( U|\nabla \widetilde Q|^2 +\frac 12 \abs{\nabla U}^2\right) =0
\end{equation*}
Recalling the definition \eqref{auxV} of $U$, we conclude that $|\widetilde Q|\leq \tilde q_0$ a.e., and therefore $\norm{Q}_{L^\infty}\leq \tilde q_0 + s_*\sqrt{2/3}$.
\end{proof}

\begin{rem} It would be interesting to prove an explicit convergence rate for the far-field behavior \eqref{condinfty}. The small particle limit (cf. Section~\ref{sec:small}) suggests a bound of the form
$\abs{Q(x)-Q_\infty}\leq C/\abs{x}$,
for some $C=C(a,b,c,L)>0$.
An indication that this bound could indeed be true is given by the following proposition, which confirms that the minimizers $Q$ approach the uniaxial set $\mathcal U_*$ at the desired rate $|x|^{-1}$.  An analogous situation prevails in the case of solutions of the Ginzburg--Landau equations in the plane \cite{shafrir94}, which have an explicit rate of decay of the complex modulus $|u|-1$, but provides much weaker  asymptotic information in the behavior of the complex phase, the circle $\mathbb{S}^1$ playing the role of $\mathcal{U}_*$ in that setting.
\end{rem}

\begin{prop}\label{prop:decay} 
There exists a constant $C=C(a,b,c)>0$ so that, for any solution $Q\in\mathcal H^\infty$  of \eqref{equilibrium} with finite energy $\mathcal{F}(Q)<\infty$ we have:
\begin{equation}\label{decay}
\dist(Q(x),\mathcal U_*) \leq  \frac{C\sqrt{L}}{\abs{x}} \quad \text{and} 
\quad |\nabla Q(x)|\le \frac{2C}{\abs{x}}.
\end{equation}
\end{prop}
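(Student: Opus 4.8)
The plan is to treat the gradient bound as the fundamental estimate and to read off the distance bound from it. Throughout I use the uniform bound $\norm{Q}_{L^\infty(\Omega)}\le q_0$ from Lemma~\ref{lem:unifbound} and the far-field convergence from Proposition~\ref{prop:existence}, together with two structural facts about the potential. Since $\mathcal U_*$ is a smooth compact nondegenerate minimum manifold of $f$, there are constants $c_0,\delta_0>0$ depending only on $a,b,c$, and a smooth nearest-point projection $\pi$ on the tubular neighbourhood $\lbrace\dist(\cdot,\mathcal U_*)<\delta_0\rbrace$, such that $f(Q)\ge c_0\,\dist(Q,\mathcal U_*)^2$ and, in the normal direction, $(Q-\pi(Q))\cdot\nabla f(Q)\ge 2c_0\,\dist(Q,\mathcal U_*)^2$ on this neighbourhood, while $f(Q)\ge c_0'\,\dist(Q,\mathcal U_*)^2$ holds on the whole bounded range $\lbrace\abs{Q}\le q_0\rbrace$ by compactness. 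The essential difficulty, which I flag at the outset, is that the target $\mathcal U_*$ is a manifold rather than a point: tangential (director) variations cost Dirichlet energy but are invisible to $f$, exactly as the phase in Ginzburg--Landau, so a naive maximum principle for $\dist(Q,\mathcal U_*)$ leaves an uncontrolled gradient term and cannot close by itself.

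The heart of the proof is therefore a Ginzburg--Landau-type small-energy (\enquote{$\varepsilon$-}) regularity estimate for the rescaled equation. At the coherence scale $\sqrt L$ the map $\widehat Q(y):=Q(\sqrt L\,y)$ solves the fixed system $\Delta\widehat Q=\nabla f(\widehat Q)$ with $\norm{\widehat Q}_\infty\le q_0$, and the natural scaled density is $e(Q)=\tfrac12\abs{\nabla Q}^2+\tfrac1L f(Q)$. The claim I would prove is that there exist $\eta>0$ and $C>0$, depending only on $a,b,c$, so that if $\tfrac1r\int_{B_{2r}(x_0)\cap\Omega}e(Q)\,dx\le\eta$ then $\sup_{B_r(x_0)\cap\Omega}e(Q)\le C/r^2$; in particular $\abs{\nabla Q}\le C/r$ and, via $f\ge c_0\dist^2$, $\dist(Q,\mathcal U_*)\le C'\sqrt L/r$ on $B_r(x_0)$. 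I would establish this by a blow-up/compactness argument — rescaling to unit scale, where the limit solves a homogeneous problem with vanishing scaled energy and is hence constant — combined with interior elliptic estimates for the fixed system; the three-dimensional Pohozaev monotonicity for $r\mapsto r^{-1}\int_{B_r}e(Q)$, whose potential contribution carries a favourable sign, makes the smallness hypothesis propagate to smaller scales.

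To apply this estimate I would verify the smallness hypothesis at the scale $r=\abs{x_0}/2$. Finiteness of $\mathcal F(Q)$ controls both $\tfrac L2\int_\Omega\abs{\nabla Q}^2$ and $\int_\Omega f(Q)$, so $\int_{B_{r}(x_0)}e(Q)\,dx\le\tfrac1L\mathcal F(Q)$ and $\tfrac1r\int_{B_r(x_0)}e(Q)\,dx\le C/\abs{x_0}$, which falls below $\eta$ once $\abs{x_0}\ge R_*$ for some $R_*=R_*(a,b,c)$. The $\varepsilon$-regularity estimate then yields $\abs{\nabla Q(x_0)}\le C/(\abs{x_0}/2)=2C/\abs{x_0}$ and $\dist(Q(x_0),\mathcal U_*)\le C\sqrt L/\abs{x_0}$ for $\abs{x_0}\ge R_*$; note that the stated factor $2$ in the gradient bound arises precisely from choosing $r=\abs{x_0}/2$. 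On the remaining compact shell $1\le\abs{x_0}\le R_*$ I would invoke up-to-the-boundary elliptic estimates, using the smoothness of the radial datum $Q_s$ and the boundary condition \eqref{weakanchor} (respectively the Dirichlet condition \eqref{stronganchor}), to bound $\abs{\nabla Q}$ and $\dist(Q,\mathcal U_*)$ there; since $\abs{x_0}$ is bounded above and below, these are absorbed into the claimed forms $2C/\abs{x_0}$ and $C\sqrt L/\abs{x_0}$.

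As an alternative, more self-contained route to the distance bound once the gradient bound is in hand, I would run a maximum-principle comparison. In the tubular neighbourhood the smooth function $g:=\dist(Q,\mathcal U_*)^2$ satisfies, by the normal-positivity of $\nabla f$ and boundedness of the Hessian of $g$, the differential inequality $L\Delta g\ge c_1 g-C_2L\abs{\nabla Q}^2$; inserting $\abs{\nabla Q}\le 2C/\abs{x}$ gives $L\Delta g\ge c_1 g-4C^2C_2\,L/\abs{x}^2$. Comparing $g$ with the barrier $A\,L\abs{x}^{-2}$, for which $\Delta(\abs{x}^{-2})=2\abs{x}^{-4}$ in $\mathbb R^3$, one chooses $A=A(a,b,c)$ large so that $g-A L\abs{x}^{-2}$ is subharmonic wherever positive on $\lbrace\abs{x}>R_1\rbrace$ and is dominated on $\lbrace\abs{x}=R_1\rbrace$; since $g\to0$ at infinity, the maximum principle forces $g\le A L\abs{x}^{-2}$, i.e. the stated bound. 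The main obstacle remains the $\varepsilon$-regularity step of the second paragraph, both because of the manifold-valued target and because all constants must be kept uniform in $L$ and $W$.
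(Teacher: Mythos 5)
Your proposal is correct and follows essentially the same route as the paper: both are Shafrir‑style arguments in which finite energy makes the scaled density $\tfrac12\abs{\nabla Q}^2+\tfrac1L f(Q)$ small in integral form at scale $\abs{x_0}$, an $\varepsilon$-regularity/monotonicity estimate for the rescaled system upgrades this to the pointwise bound $\lesssim\abs{x_0}^{-2}$, and the quadratic nondegeneracy $f(Q)\gtrsim\dist(Q,\mathcal U_*)^2$ (compactness for large distance, nondegenerate normal Hessian near $\mathcal U_*$) converts the potential term into the distance bound. The only difference is that the paper imports the $\varepsilon$-regularity step from Lemma~7 of \cite{majumdarzarnescu10} rather than proving it by blow-up, and carries out the nondegeneracy claim by an explicit expansion in an orthonormal basis of $\mathcal S_0$; your maximum-principle alternative is not needed.
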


\begin{proof}
We follow the strategy of \cite{shafrir94}, which proved decay estimates for solutions to the Ginzburg--Landau equations in the plane.  Let $A:=\overline{B_4}\setminus B_1$ and for any $R>1$, $A_R:=\overline{B_{4R}}\setminus B_R$.  Define $Q_R(y):= Q(Ry)$ for $y\in A$, 
with energy
$$   \mathcal{E}_R(Q_R; A) = \int_A e_R(Q)\, dy, \quad\text{with}\quad
  e_R(Q)= \frac12 |\nabla Q_R|^2 + {R^2\over L} f(Q_R).  $$
By a change of variables in the integral,
$$
  \mathcal{E}_R(Q_R;A) = {1\over LR}\int_{A_R} \left[ \frac{L}2 |\nabla Q|^2 +  f(Q)\right] dx = o(R^{-1}),
$$
as $R\to\infty$, by the finite energy assumption on $Q$.  Thus, $\nabla Q_R\to 0$ and $R^2\int_A f(Q_R)\, dx \to 0$.  Recalling that $Q(x)\to Q_\infty$ as $|x|\to\infty$ from Proposition~\ref{prop:existence}, we may assume $Q_R\to Q_\infty$ in $H^1(A)$.

We now employ the convergence results for Landau-de~Gennes as $L\to 0$, proven in 
\cite{majumdarzarnescu10}, with our ${L\over R^2}\to 0$ replacing their $L$ in this context.  Although the convergence results in \cite[§~4]{majumdarzarnescu10} are stated for global minimizers of the energy in bounded domains with Dirichlet condition, the proofs of the various convergence lemmas are based on the Monotonicity formula, and apply as well to solutions of the Euler--Lagrange equations \eqref{equilibrium} with uniformly bounded energy, converging in $H^1$-norm.  In particular, we apply Lemma~7 of \cite{majumdarzarnescu10} to $Q_R$ in $\tilde A:= B_3\setminus B_1$, which contains no singularities for sufficiently large $R$:  since for every $a\in A$ we have $\int_{B_1(a)} e_R(Q_R(y)) dy \le o(1),$ by the Lemma
there exists $C_2=C_2(a,b,c)$ for which 
$$\sup_{B_{\frac12}(a)}\left[ \frac12 |\nabla Q_R|^2 + {R^2\over L} f(Q_R)\right] \le C_2.$$
Covering $\tilde A$ by balls of radius 1, we obtain the same uniform bound 
\begin{equation}\label{decayest}
   \frac12 |\nabla Q_R(y)|^2 + {R^2\over L} f(Q_R(y)) \le C_2.  
\end{equation}
for all $y\in \tilde A$.  

Next, we claim that there exists a constant $C_3=C_3(a,b,c)$ such that for any $Q\in \mathcal S_0$, 
\begin{equation}\label{fclaim}
   \left[\dist(Q,\mathcal U_*)\right]^2\le C_3 f(Q).  
 \end{equation}
Indeed, since $f(Q)\gtrsim |Q|^4$ as $|Q|\to\infty$ and $f(Q)=0$ if and only if $Q\in\mathcal{U}_*$, for any fixed $\delta>0$ there exists $C=C(\delta,a,b,c)$ such that the desired bound holds for all $Q\in \mathcal S_0$ with $[\dist(Q,\mathcal U_*)]^2\ge \delta$.  

Since $\mathcal U_*$ is smooth and compact, there exists $\delta>0$ such that for all $Q\in \mathcal S_0$ with $\dist(Q,\mathcal{U}_*)<\delta$ there is a unique orthogonal projection $Q_*\in \mathcal{U}_*$ which minimizes the distance from $Q$ to $\mathcal{U}_*$.  Thus, $Q=Q_* + Q_n$, with $Q_n \perp T_{Q_*}\mathcal{U}_*$, and $|Q_n| = \dist(Q,\mathcal{U}_*)$.  The function $f$ is frame-invariant (it holds $f(R^{-1} Q R)=f(Q)$ for any rotation $R$), so without loss of generality we may assume that $Q_*=Q_\infty=s_*\left(e_z\otimes e_z -\frac13 I\right)$.  Let us consider the orthonormal basis $\lbrace A_j\rbrace$ of $\mathcal S_0$ given by
\begin{gather*}
A_1=\sqrt{\frac 32}(e_z\otimes e_z-I/3),\quad A_2=\frac{1}{\sqrt 2}(e_x\otimes e_x-e_y\otimes e_y),\\
A_3=\frac{1}{\sqrt 2}(e_x\otimes e_y + e_y\otimes e_x),\quad A_4=\frac{1}{\sqrt 2}(e_x\otimes e_z + e_z\otimes e_x),\\
 A_5=\frac{1}{\sqrt 2}(e_y\otimes e_z + e_z\otimes e_y),
\end{gather*}
and identify $Q\in\mathcal S_0$ with $u\in\mathbb R^5$ via
\begin{equation*}
Q=\sum_j u_j\, A_j.
\end{equation*}
In this basis, $T_{Q_*}\mathcal{U}_*=\text{span}\,\{A_4,A_5\},$ and thus we may represent \begin{equation}\label{A123}
Q_n=\sum_{j=1}^3 u_j A_j.
\end{equation}  
Expanding the potential in terms of $u$,
$$
f(Q) = f(Q_*+Q_n)-f(Q_*)= \left(2a+\frac{bs_*}{6}\right)u_1^2 +\frac{bs_*}{2}(u_2^2 +u_3^2)
   + h(u),
$$
where $h(u)=O(|u|^3)$ is a polynomial with terms of third and fourth degree in $u$.  By the representation \eqref{A123} of $Q_n$, we thus have
$$
f(Q)=f(Q_*+Q_n) \ge C_4\left(u_1^2 + u_2^2 + u_3^2\right) = C_4|Q_n|^2=C_4\left[\dist(Q,\mathcal{U}_*)\right]^2,
$$
with constant $C_4=C_4(\delta,a,b,c)$.  The claim is thus established for all $Q\in\mathcal{S}_0$.

Finally, as in \cite{shafrir94} we may return to the original scale $x=Ry$,  to obtain the desired bounds \eqref{decay}.  Indeed, by \eqref{decayest} and $|x|=R|y|\le 3R$,
$$  \frac{1}2 |\nabla Q(x)|^2 + {1\over L}f(Q(x)) 
= R^{-2}\left(\frac1{2} |\nabla Q_R(y)|^2 +{R^2\over L}f(Q_R(y)) \right) \le {C_2\over R^2}
\le {9C_2\over |x|^2},
$$
so the conclusion follows from the above and \eqref{fclaim}.
\end{proof}

\begin{rem}  \rm
From the decay estimate \eqref{decay} it is straightforward (see Proposition~7 of \cite{majumdarzarnescu10}) to show that the eigenvalues of $Q(x)$ tend to those of the uniaxial tensors in $\mathcal{U}_*$, and at the same asymptotic rate $\sqrt{L}/|x|$ as $|x|\to\infty$.
\end{rem}

\section{Small particle: the Saturn ring}\label{sec:small}

This section is dedicated to proving Theorem~\ref{thm:small}, and then studying the limiting configuration $Q_0$, whose expression we recall here:
\begin{equation}\label{Q0}
Q_0=s_*\frac{w}{3+w}\frac{1}{r^3}\left(e_r\otimes e_r-\frac 13 I\right)+s_*(1-\frac{w}{1+w}\frac 1r )\left( e_z\otimes e_z-\frac 13 I\right).
\end{equation}

\begin{proof}[Proof of Theorem~\ref{thm:small}.]
Recall that we assume $r_0=1$, so that we are in fact taking the limit $(1/L,W/L)\to (0,w)$.
It is straightforward to check that the map $Q_0$ \eqref{Q0} belongs to $\mathcal H_\infty$ and solves
\begin{equation}\label{eqQ0}
\left\lbrace
\begin{aligned}
&\Delta Q_0 = 0 \quad\text{in }\Omega,\\
&\frac{1}{w}\frac{\partial Q_0}{\partial\nu} = Q_s-Q_0\quad\text{on }\partial\Omega.
\end{aligned}
\right.
\end{equation}
In what follows we emphasize the dependence of $Q_0$ on the parameter $w>0$ by writing $Q_0=Q_{0,w}$. 
Consider the map $\overline Q := Q-Q_{0,W/L}$, which solves
\begin{equation}\label{eqQbar}
\left\lbrace
\begin{aligned}
&\Delta \overline Q = \frac 1L \nabla f(Q)\quad\text{in }\Omega,\\
& \frac LW \frac{\partial \overline Q}{\partial\nu} + \overline Q = 0 \quad\text{on }\partial\Omega.
\end{aligned}
\right.
\end{equation}
Next we apply an analog of the interpolated estimate of \cite[Lemma~A.2]{bbh93} for the oblique derivative problem (boundary conditions $\partial u/\partial \nu + \alpha u =0$) in $\Omega$: it holds
\begin{equation}\label{interpol}
\norm{\nabla \overline Q}^2_{L^\infty} \leq C \norm{\Delta \overline Q}_{L^\infty} \norm{\overline Q}_{L^\infty}.
\end{equation}
This estimate can be proven exactly as in \cite{bbh93}: apply \cite[Lemma~A.1]{bbh93} which does not need to be adapted, and then follow the proof of \cite[Lemma~A.2]{bbh93} using elliptic $L^p$ estimates for the oblique derivative problem near $\partial\Omega$ (see e.g. \cite[§~15]{agmondouglisnirenberg59} or \cite{chicco72}) instead of the homogeneous Dirichlet problem. The constant $C>0$ depends only on $\delta>0$ such that $w\geq \delta$.

Since we can infer from Lemma~\ref{lem:unifbound} that
$\norm{\nabla f(Q)}_{L^\infty}\leq C$
(where $C>0$ depends only on $a$, $b$ and $c$), the estimate \eqref{interpol} and the equation \eqref{eqQbar} imply that
\begin{equation}\label{gradientQbar}
\norm{\nabla \overline Q}_{L^\infty} \leq \frac{C}{\sqrt L},
\end{equation}
from which we deduce that for any compact $K\subset\overline \Omega$, it holds
\begin{equation*}
\norm{\overline Q}_{L^\infty(K)} \leq C\, \frac{\mathrm{diam}(K)}{\sqrt L},
\end{equation*}
with the constant $C>0$ depending on $a$, $b$, $c$ and $\delta>0$ such that $w\geq \delta$.
On the other hand it can be easily checked that, at any order $k\in\mathbb N$,
\begin{equation*}
\norm{Q_{0,W/L}-Q_{0,w}}_{C^k}\leq c_k \abs{1/w - L/W},
\end{equation*}
so that $Q-Q_0 = \overline Q + Q_{0,W/L}-Q_{0,w}$ converges to zero locally uniformly in $\overline \Omega$, as $(1/L,W/L)\to (0,w)$.
\end{proof}

%
%

The main interest of Theorem~\ref{thm:small} lies in the explicit expression for the limit $Q_0$. Next we investigate its most important features. We start by interpreting the Saturn ring defect as a locus of uniaxiality. More precisely, let $U_w$ be the uniaxial locus of $Q_0=Q_{0,w}$ \eqref{Q0} away from the $z$-axis (on which $e_r=\pm e_z$ and $Q_0$ is trivially uniaxial):
\begin{equation}\label{defUw}
U_w := \left\lbrace x\in\Omega \setminus \mathbb R e_z \colon Q_0(x)\text{ is uniaxial}\right\rbrace.
\end{equation}
Then we have:

\begin{prop}\label{prop:Uw}
For $w>\sqrt 3$ it holds
\begin{equation}\label{Uw}
U_w = \left\lbrace (x_1,x_2,0) \colon x_1^2+x_2^2=r_w^2 \right\rbrace,
\end{equation}
where $r_w$ is the unique solution $r>1$ of
\begin{equation}\label{rw}
r^3-\frac{w}{1+w}r^2-\frac{w}{3+w} = 0.
\end{equation}
The function $w\mapsto r_w$ increases from $r_{\sqrt 3} =1$ to a finite value $r_\infty \approx 1.47$, as $w$ increases from $\sqrt 3$ to $\infty$.

For $w\leq \sqrt 3$, $U_w$ is empty.
\end{prop}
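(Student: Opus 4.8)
The plan is to diagonalize $Q_0$ pointwise and reduce uniaxiality to an algebraic condition on a single $2\times 2$ block. Write $Q_0 = \alpha\,(e_r\otimes e_r - \tfrac13 I) + \beta\,(e_z\otimes e_z - \tfrac13 I)$ with $\alpha = s_*\frac{w}{3+w}\frac{1}{r^3}$ and $\beta = s_*(1 - \frac{w}{1+w}\frac1r)$. Since $e_\theta$ is orthogonal to both $e_r$ and $e_z$, it is always an eigenvector of $Q_0$, with eigenvalue $-\tfrac13(\alpha+\beta)$, and the plane $\mathrm{span}(e_\rho,e_z)$ is invariant. In the orthonormal frame $(e_\rho,e_\theta,e_z)$ the tensor is therefore block diagonal, the nontrivial block being $N-\tfrac13(\alpha+\beta)I_2$ with
\[
N=\begin{pmatrix} \alpha\sin^2\varphi & \alpha\sin\varphi\cos\varphi \\ \alpha\sin\varphi\cos\varphi & \alpha\cos^2\varphi+\beta \end{pmatrix},\qquad \tr N=\alpha+\beta,\qquad \det N=\alpha\beta\sin^2\varphi.
\]
The one sign fact I will lean on throughout is that for $r>1$ one has $\alpha>0$ and $\beta>0$ (the latter because $\frac{w}{1+w}\frac1r<1$).

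Next I characterize uniaxiality. The three eigenvalues of $Q_0$ are $0,\mu_+,\mu_-$, all shifted by the common constant $-\tfrac13(\alpha+\beta)$, where $\mu_\pm$ are the eigenvalues of $N$; hence, away from the axis, $Q_0$ is uniaxial iff two of $\{0,\mu_+,\mu_-\}$ coincide. This splits into two cases: (i) $N$ is singular, $\det N=\alpha\beta\sin^2\varphi=0$; or (ii) $N$ has a double eigenvalue, i.e. its discriminant $(\alpha+\beta)^2-4\alpha\beta\sin^2\varphi$ vanishes. In case (i), away from the axis $\sin\varphi\neq 0$ and $\alpha>0$, so it forces $\beta=0$, i.e. $r=\frac{w}{1+w}<1$, which never lies in $\Omega$; this case contributes nothing. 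In case (ii), since $\alpha\beta>0$ and $\sin^2\varphi\le 1$ we have $(\alpha+\beta)^2-4\alpha\beta\sin^2\varphi\ge (\alpha+\beta)^2-4\alpha\beta=(\alpha-\beta)^2\ge 0$, with equality iff simultaneously $\sin^2\varphi=1$ (the equatorial plane $z=0$) and $\alpha=\beta$. Thus $U_w$ lies in the equatorial plane and is cut out by $\alpha=\beta$.

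Clearing denominators in $\alpha=\beta$ gives exactly the cubic \eqref{rw}, $P(r):=r^3-\frac{w}{1+w}r^2-\frac{w}{3+w}=0$. To finish I study $P$ on $[1,\infty)$. A direct computation yields $P(1)=\frac{3-w^2}{(1+w)(3+w)}$, while $P'(r)=r\bigl(3r-\frac{2w}{1+w}\bigr)>0$ for $r\ge 1$ (since $\frac{2w}{1+w}<2<3r$), so $P$ is strictly increasing there and $P(r)\to+\infty$. Hence for $w>\sqrt3$ we have $P(1)<0$ and there is a unique root $r_w>1$, producing the circle \eqref{Uw}; for $w\le\sqrt3$, $P(1)\ge 0$ forces $P>0$ on $(1,\infty)$ (the root sits at $r=1$ when $w=\sqrt3$ and inside the colloid otherwise), so $U_w=\emptyset$. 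Monotonicity of $w\mapsto r_w$ follows from the implicit function theorem: $\partial_w P=-\frac{r^2}{(1+w)^2}-\frac{3}{(3+w)^2}<0$ together with $\partial_r P>0$ give $r_w'>0$, and letting $w\to\infty$ the cubic tends to $r^3-r^2-1=0$, whose root is $r_\infty\approx 1.47$.

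The main obstacle — the only genuinely substantive point — is ruling out uniaxiality off the equatorial plane. This is precisely where the positivity $\alpha,\beta>0$ for $r>1$ does the work: it pushes the determinant case inside the colloid (hence out of $\Omega$), and it turns the discriminant into a quantity bounded below by $(\alpha-\beta)^2$, pinning the double-eigenvalue locus to $\{\varphi=\pi/2,\ \alpha=\beta\}$. Everything after that is the elementary one-variable analysis of the cubic $P$.
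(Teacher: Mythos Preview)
Your proof is correct and follows essentially the same route as the paper: both write $Q_0=\alpha(e_r\otimes e_r-\tfrac13 I)+\beta(e_z\otimes e_z-\tfrac13 I)$, exploit the positivity $\alpha,\beta>0$ for $r>1$ to rule out the case where the $e_\theta$-eigenvalue coincides with a block eigenvalue, and reduce the remaining double-eigenvalue condition to $\varphi=\pi/2$ and $\alpha=\beta$, then analyze the resulting cubic exactly as you do. The only difference is packaging---the paper computes the full $3\times 3$ characteristic polynomial directly, whereas you first isolate the $2\times 2$ block $N$ in the $(e_\rho,e_z)$ plane---but the eigenvalue analysis and the role of positivity are identical.
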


\begin{proof}
Using spherical coordinates $(r,\theta,\varphi)$, the map $Q_0$ is of the form
\begin{equation*}
Q_0 = \alpha(r) (e_r\otimes e_r -I/3) + \beta(r) (e_z\otimes e_z -I/3),\qquad (r>1),
\end{equation*}
where the functions $\alpha(r),\beta(r)>0$ are given by
\begin{equation}\label{alphabeta}
\alpha(r) = s_*\frac{w}{3+w}\frac{1}{r^3} ,\quad\beta(r)=s_*(1-\frac{w}{1+w}\frac 1r ).
\end{equation}
Using the fact that $e_r=\cos\varphi\ e_z +\sin\varphi\ e_\rho$, and defining
\begin{equation*}
\sigma:=\alpha+\beta,\quad\nu:=\alpha\beta,
\end{equation*}
 elementary computations show that the characteristic polynomial of $Q_0$ is
\begin{align*}
P (X) &= \left( X +\frac\sigma 3 \right)\left( X^2-\frac\sigma 3\, X -\frac 29 \sigma^2 +\nu \sin^2\varphi \right)\\
& = \left(X+\frac\sigma 3\right)\left(X-\frac\sigma 6 -\sqrt{\frac{\sigma^2}{4}-\nu\sin^2\varphi} \right)\left(X-\frac\sigma 6 +\sqrt{\frac{\sigma^2}{4}-\nu\sin^2\varphi} \right).
\end{align*}
Note that
\begin{equation*}
\frac{\sigma^2}{4}-\nu\sin^2\varphi =\frac{1}{4}(\alpha-\beta)^2 + \nu\cos^2\varphi \geq 0,
\end{equation*}
so that the above square root is well defined. The eigenvalues $\lambda^0_1 \geq \lambda^0_2 \geq \lambda^0_3$ of $Q_0$ are given by
\begin{align*}
\lambda^0_1 &=\frac\sigma 6 +\sqrt{\frac{\sigma^2}{4}-\nu\sin^2\varphi},\\
\lambda^0_2 & =\frac\sigma 6 -\sqrt{\frac{\sigma^2}{4}-\nu\sin^2\varphi},\\
\lambda^0_3 &= -\frac\sigma 3.
\end{align*}
We are looking for the points where $Q_0$ is uniaxial, \textit{i.e.} either $\lambda^0_1=\lambda^0_2$ or $\lambda^0_2=\lambda^0_3$. For $0<\varphi<\pi$, it holds
\begin{align*}
\lambda^0_1&=\lambda^0_2\quad\Longleftrightarrow\quad \varphi=\frac\pi 2 \text{ and }\alpha=\beta,\\
\lambda^0_2&=\lambda^0_3 \quad\Longleftrightarrow\quad \alpha=0 \text{ or }\beta=0.
\end{align*}
Since $\alpha,\beta>0$, only the first case ($\lambda^0_1=\lambda^0_2$) can occur. Given the expressions \eqref{alphabeta} of $\alpha$ and $\beta$, we deduce that
\begin{gather*}
U_w=\left\lbrace (x_1,x_2,0)\colon 1 < x_1^2+x_2^2 = r^2,\; p(w,r)=0 \right\rbrace,\\
\text{where}\quad p(w,r):=r^3-\frac{w}{1+w}r^2-\frac{w}{3+w}.
\end{gather*}
It is straightforward to check that $r\mapsto p(w,r)$ is increasing on $(1,\infty)$ and therefore $p(w,\cdot)=0$ has at most one solution in this interval. Since on the other hand
\begin{equation*}
(1+w)(3+w)\cdot p(w,1)=3-w^2,
\end{equation*}
there is a solution $r_w>1$  if and only if $w>\sqrt 3$. The function $w\mapsto r_w$ is easily seen to be smooth, and its derivative has the same sign as
\begin{equation*}
-\partial_w p(w,r)=\frac{r^2}{(1+w)^2}+\frac{3}{(3+w)^2}>0,
\end{equation*}
so that $r_w$ is an increasing function of $w$. As $w$ increases to $\infty$, $r_w$ increases to $r_\infty>1$ such that $r_\infty^3 - r_\infty^2 -1=0$.
\end{proof}

\begin{rem}
Since $Q_0$ is biaxial except along the Saturn ring locus $U_w$, there is no conventional nematic director field attached to it.  However, it is reasonable to distinguish a principal direction as an approximate or mean {\it ``director''} field $n_0$, the unit eigenvector 
associated to the largest eigenvalue $\lambda^0_1$. This is well-defined up to a sign, at every point where $\lambda^0_1$ is a simple eigenvalue: that is, everywhere except at $U_w$, where it jumps discontinuously as the eigenvalue branches cross. Then one may compute, for $0<\varphi\leq \pi/2$,
\begin{equation*}
\begin{gathered}
n_0=\sqrt{\frac{1-\mu}{2}}e_\rho +\sqrt{\frac{1+\mu}{2}}e_z,\qquad
\mu :=\frac{\alpha(1-2\sin^2\varphi)+\beta}{\sqrt{\alpha^2+\beta^2 +2\alpha\beta (1-2\sin^2\varphi)}},
\end{gathered}
\end{equation*}
with $\alpha,\beta$ as in \eqref{alphabeta}. For $\pi>\varphi >\pi/2$ the director field is obtained by reflecting with respect to the horizontal plane : $n_0(\varphi)=\sqrt{(1-\mu)/2}\, e_\rho -\sqrt{(1+\mu)/2}\, e_z$. Note that this way $n_0$ is not continuous in $\Omega\setminus U_w$, but it is continuous as an $\mathbb {R P}^2$-valued map : the map $n_0\otimes n_0$ is continuous in $\Omega\setminus U_w$.
\end{rem}

\begin{figure}[ht] 
\begin{center}
\begin{subfigure}{.42\textwidth}
\includegraphics[width=\textwidth]{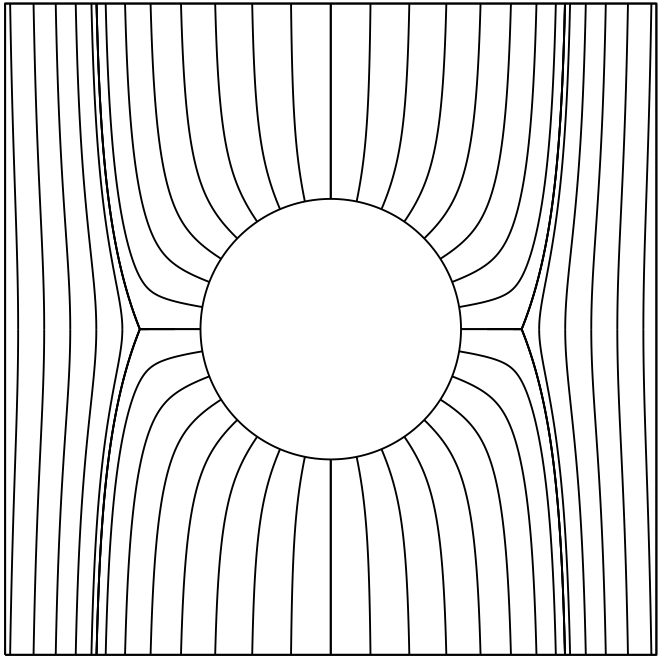}
\caption{$w=\infty$.}
\end{subfigure}
\hspace{.08\textwidth}
\begin{subfigure}{.42\textwidth}
\includegraphics[width=\textwidth]{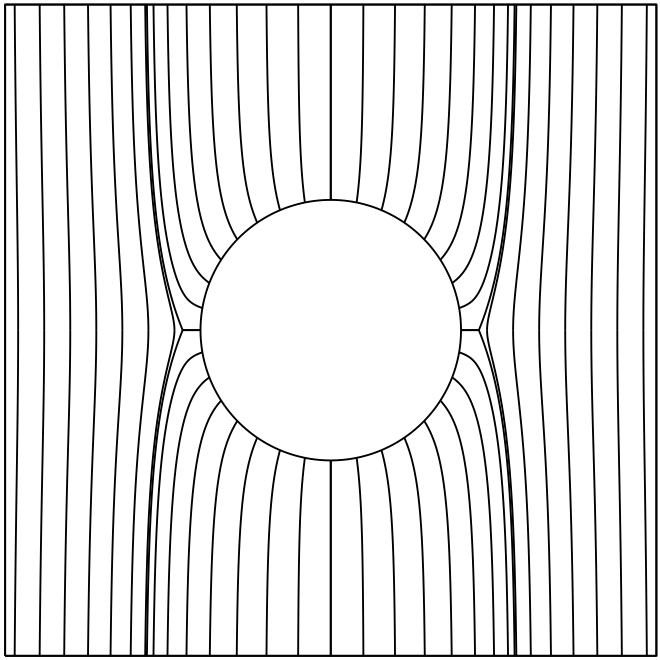}
\caption{$w=3$.}
\end{subfigure}

\vspace{1em}

\begin{subfigure}{.42\textwidth}
\includegraphics[width=\textwidth]{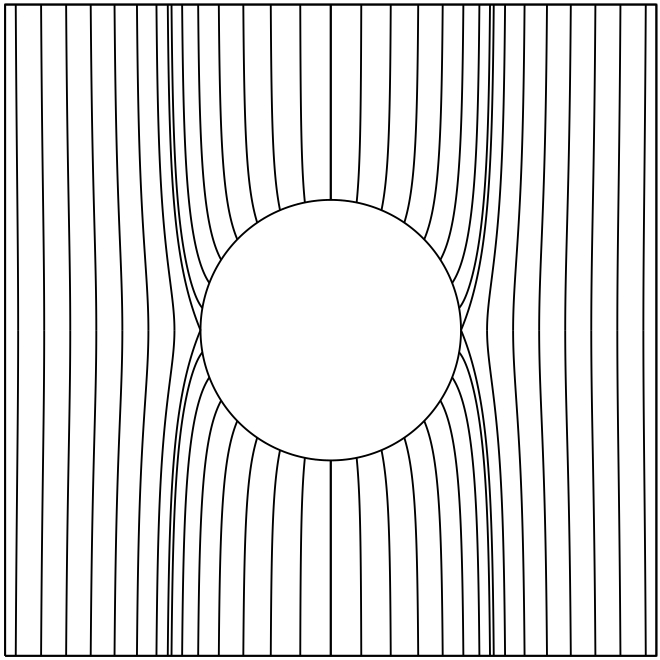}
\caption{$w=1.732\approx \sqrt 3$.}
\end{subfigure}
\hspace{.08\textwidth}
\begin{subfigure}{.42\textwidth}
\includegraphics[width=\textwidth]{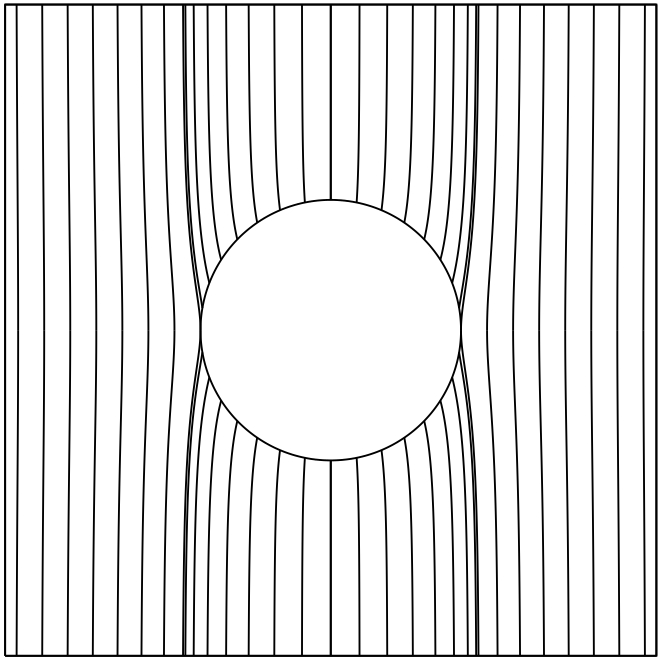}
\caption{$w=1$.}
\end{subfigure}
\caption{The director field $n_0$ (integral curves).}\label{pictures}
\end{center}
\end{figure}

\begin{cor}\label{cor:U}
Let $w>\sqrt 3$.
There exists $\delta_0>0$ such that, if 
\begin{equation*}
\delta :=\frac 1L +\abs{\frac LW -\frac 1w} <\delta_0,
\end{equation*}
then any solution $Q\in\mathcal H_\infty$ of \eqref{equilibrium}-\eqref{weakanchor} admits a uniaxial ring near $U_w$. More precisely, $Q(x)$ is uniaxial for all $x$ belonging to
\begin{equation*}
U=\left\lbrace (\rho_u(\theta)\cos\theta,\rho_u(\theta)\sin\theta,z_u(\theta))\colon \theta\in\mathbb R\right\rbrace,
\end{equation*}
where the functions $\rho_u(\theta)$ and $z_u(\theta)$ satisfy
\begin{equation*}
\abs{\rho_u(\theta)-r_w}+\abs{z_u(\theta)}\leq\varepsilon(\delta)\to 0,
\end{equation*}
as $\delta\to 0$.
\end{cor}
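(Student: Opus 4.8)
The plan is to reduce the statement to a transversality property of a suitable vector-valued biaxiality map and then invoke a quantitative implicit function theorem, feeding in the $C^1$ convergence $Q\to Q_0$ already obtained in the proof of Theorem~\ref{thm:small}.

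First I would build a two-component measure of biaxiality near $U_w$. From the proof of Proposition~\ref{prop:Uw}, along $U_w$ the limit $Q_0$ has eigenvalues $\lambda^0_1=\lambda^0_2=\alpha/3>0$ and $\lambda^0_3=-2\alpha/3$, so its smallest eigenvalue is isolated with gap at least $\alpha(r_w)>0$; by the locally uniform convergence $Q\to Q_0$ this gap persists for $Q$ on a fixed neighborhood $N$ of $U_w$ once $\delta$ is small. Hence the rank-two spectral projection $P(x)$ onto the top eigenspace of $Q(x)$ is well defined and smooth on $N$ (holomorphic functional calculus), and $Q(x)$ is uniaxial iff the trace-free part $M(x):=PQP-\tfrac12\tr(PQP)\,P$ of $Q(x)$ on $\mathrm{Range}\,P(x)$ vanishes—necessarily of the type $\lambda_1=\lambda_2$, since $\lambda_3$ stays separated. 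Choosing a smooth orthonormal frame $(A(x),B(x))$ of the trace-free symmetric operators on $\mathrm{Range}\,P(x)$, which exists near the circle $U_w$ because every vector bundle over $S^1$ is trivial, I set $\Phi:=(\langle Q,A\rangle,\langle Q,B\rangle)\colon N\to\mathbb R^2$, so that $\{\Phi=0\}$ is exactly the uniaxial locus of $Q$ in $N$.

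Next I would check that $\Phi_0$ (the map attached to $Q_0$) vanishes \emph{non-degenerately} along $U_w$. Working at $\theta=0$, where $(e_r,e_\theta,e_z)=(e_{x_1},e_{x_2},e_{x_3})$ and $e_r=\sin\varphi\,e_\rho+\cos\varphi\,e_z$, one finds that $e_\theta$ is the eigenvector for $\lambda^0_3=-\sigma/3$, and the trace-free part of $Q_0$ on $\mathrm{span}(e_r,e_z)$ is encoded by $a=-\alpha\cos2\varphi-\beta$ and $b=\alpha\sin2\varphi$, with $\alpha,\beta$ as in \eqref{alphabeta}. At $(r,\varphi)=(r_w,\pi/2)$ one has $\cos2\varphi=-1$, $\sin2\varphi=0$, so the Jacobian $\partial(a,b)/\partial(r,\varphi)$ is diagonal with entries $(\alpha-\beta)'(r_w)$ and $-2\alpha(r_w)$. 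Since $\alpha-\beta=-s_*\,p(w,r)/r^3$ with $p$ the polynomial in \eqref{rw}, which is strictly increasing in $r$, one has $\partial_r p(w,r_w)>0$, hence $(\alpha-\beta)'(r_w)\neq0$; together with $\alpha(r_w)>0$ this makes $D\Phi_0$ surjective along $U_w$, with kernel exactly the tangent direction $\partial_\theta$. By the axial symmetry of $Q_0$ this non-degeneracy is uniform around the ring.

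Finally I would combine convergence with the implicit function theorem. The gradient bound \eqref{gradientQbar}, together with $\norm{Q_{0,W/L}-Q_{0,w}}_{C^k}\to0$, upgrades the convergence of Theorem~\ref{thm:small} to $Q\to Q_0$ in $C^1(N)$, hence $P$, $M$ and therefore $\Phi\to\Phi_0$ in $C^1(N)$ as $\delta\to0$. Applying the quantitative implicit function theorem to $\Phi$, with uniform invertibility constants furnished by the axisymmetric non-degeneracy of $D\Phi_0$, the zero set $\{\Phi=0\}$ is for small $\delta$ a single $C^1$ closed curve inside a tube of radius $\varepsilon(\delta)\to0$ around $U_w$; projecting it onto the $\theta$-circle and writing it in cylindrical coordinates produces the functions $\rho_u(\theta),z_u(\theta)$ with $\abs{\rho_u(\theta)-r_w}+\abs{z_u(\theta)}\le\varepsilon(\delta)$. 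The main obstacle is precisely the issue treated in the first two paragraphs: uniaxiality is a codimension-two condition and the scalar gap $\lambda_1-\lambda_2\ge0$ touches zero only tangentially along $U_w$, so it cannot be propagated to $Q$ by a naive continuity argument; the fix is to replace this degenerate scalar by the two-component map $\Phi$, whose vanishing is transverse, the remaining care being the global framing around the ring and the uniform $C^1$ control of $\Phi-\Phi_0$ and of $(D\Phi_0)^{-1}$, which the symmetry of $Q_0$ provides.
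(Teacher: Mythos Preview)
Your argument is correct in substance and yields a stronger conclusion than the paper's, but the route is genuinely different.  The paper does \emph{not} use the implicit function theorem: instead it argues topologically.  For each azimuthal angle $\theta$ it considers the small disc $D_\theta$ in the half-plane $H_\theta$ centered at the point of $U_w$, constructs from $Q$ a perturbed director $n$ close to $n_0$ on $\partial D_\theta$, observes that $n_0\otimes n_0$ restricted to $\partial D_\theta$ represents a nontrivial class in $\pi_1(\mathbb{RP}^1)$, and concludes that $n\otimes n$ cannot extend continuously over $D_\theta$.  If $Q$ were biaxial throughout $D_\theta$ its eigenframe would be smooth (by \cite{nomizu73}), giving such an extension; hence each $D_\theta$ contains a uniaxial point, which is all the corollary asserts.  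This needs only the $C^0$ convergence of Theorem~\ref{thm:small}, no transversality computation, and no global framing.

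Your approach trades this topological robustness for analytic precision: by checking that $(\alpha-\beta)'(r_w)\neq 0$ and $\alpha(r_w)>0$ you show $D\Phi_0$ is surjective on $U_w$, and the $C^1$ convergence (available from \eqref{gradientQbar}, since $\|\nabla\overline Q\|_\infty\le C/\sqrt L\to 0$) then lets the implicit function theorem produce the uniaxial locus as a single $C^1$ closed curve, not merely one point per $\theta$.  One small correction: the assertion that ``every vector bundle over $S^1$ is trivial'' is false (the M\"obius bundle is a counterexample in every rank); what is true here is that the specific rank-two bundle $\mathrm{Range}\,P_0$ is trivialized explicitly by $(e_\rho,e_z)$, and $\mathrm{Range}\,P$ inherits a trivialization by projecting this frame, since $P$ is $C^0$-close to $P_0$.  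With that fix your argument goes through and in fact proves more than the stated corollary.
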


\begin{proof}
Biaxiality can be quantified through the biaxiality parameter \cite{kaiserwiesehess92}
\begin{equation*}
\beta(Q)=1-6\frac{(\tr(Q^3))^2}{\abs{Q}^6},
\end{equation*}
which is such that : $Q$ is uniaxial if and only if $\beta(Q)=0$. Let us fix $R>r_w$. From Theorem~\ref{thm:small} and Proposition~\ref{prop:Uw} we infer that there exists $\delta_0$ such that for $\delta<\delta_0$ it holds
\begin{equation*}
\beta(Q)>0\quad\text{in }A:=\left\lbrace x\in B_R \colon \dist (x,U_w\cup \mathbb R e_z)\geq \varepsilon(\delta)\right\rbrace,
\end{equation*}
for some $\varepsilon(\delta)\to 0$ as $\delta\to 0$. Let us write $\chi_0$, the characteristic polynomial of $Q_0$, as
\begin{equation*}
\chi_0 =(X-\lambda_1^0)P_0,
\end{equation*}
where $P_0=(X-\lambda_2^0)(X-\lambda_3^0)$. By continuity of the roots of a polynomial, it is clear that the characteristic polynomial $\chi$ of $Q$ satisfies
\begin{equation*}
\chi =(X-\lambda_1)P,\quad \abs{\lambda_1-\lambda_1^0}+\abs{P-P_0}\leq c_1(\delta)\quad \text{in }A,
\end{equation*}
for some $c_1(\delta)\to 0$ as $\delta\to 0$. The eigenvalue $\lambda_1$ and the coefficients of $P$ depend continuously on $x\in A$. Note that $P_0(Q_0)n_0\neq 0$ in $A$, and define
\begin{equation*}
u:=\frac{1}{\abs{P_0(Q_0)n_0}}P(Q)n_0.
\end{equation*} 
Then $Q\, u =\lambda_1 u$, and $\abs{u-n_0}\leq c_2(\delta)\to 0$ in $A$, so that $u\neq 0$ and we may define $n=u/\abs{u}$. It holds $Q\, n=\lambda_1 n$ and
\begin{equation*}
\abs{n(x)-n_0(x)}\leq c_3(\delta)\to 0.
\end{equation*}
Moreover the map  $n\otimes n$ is continuous in $A$.

Now fix $\theta\in\mathbb R$ and denote by $H_\theta$ the half plane corresponding to the azimuthal angle $\theta$
\begin{equation*}
H_\theta =\left\lbrace (\rho\cos\theta,\rho\sin\theta,z)\colon \rho\geq 0,\; z\in\mathbb R\right\rbrace,
\end{equation*}
and by $D_\theta$ the disc in $H_\theta$ of radius $\varepsilon(\delta)$ and of center at the point where the ring $U_w$ intersects $H_\theta$:
\begin{equation*}
D_\theta =\left\lbrace (\rho\cos\theta,\rho\sin\theta,z)\colon (\rho-r_w)^2+z^2 \leq\varepsilon(\delta)^2\right\rbrace.
\end{equation*}
We claim that for low enough $\delta$ there exists $x\in D_\theta$ such that $Q(x)$ is uniaxial (which obviously proves Corollary~\ref{cor:U}). 

To prove the claim, note that when restricted to $H_\theta$, the director field $n_0$ may be viewed as an $\mathbb S^1$-valued map since it takes values in $\mathbb S^2\cap H_\theta$. Then the restriction of $n_0\otimes n_0$ to $\partial D_\theta$ is topologically non trivial : it corresponds to a non trival class of $\pi_1(\mathbb{RP}^1)$, as can be seen by explicitly computing its degree. Since for $\delta$ low enough $n(x)$ is arbitrarily close to $n_0(x)$, this implies that the map $n\otimes n$, which is continuous in $A\cap H_\theta$, admits no continuous extension inside $D_\theta$. Therefore $Q$ can not be biaxial everywhere in $D_\theta$ : if it were the case, $Q$ would have only simple eigenvalues and admit a differentiable eigenframe \cite{nomizu73}. In particular there would be a differentiable vector field $\tilde n$ defined in $D_\theta$, such that $\tilde n\otimes \tilde n$ extends $n\otimes n$.
\end{proof}

\section{Large particle: the dipole structure}\label{sec:large}

As mentioned in the introduction, the  core of Theorem~\ref{thm:large} is the fact that the axially symmetric harmonic map obtained in the limit of a large particle has exactly one singularity. In this section, we prove this result (Theorem~\ref{thm:onesing} below) and then complete the proof of Theorem~\ref{thm:large}.

We define the axially symmetric $\mathbb S^2$-valued maps to be exactly the maps $n\in H^1_{loc}(\Omega;\mathbb S^2)$ which can be written in cylindrical coordinates $(\rho,\theta,z)$ as
\begin{equation}\label{npsi}
n = \sin\psi(\rho,z) \, e_\rho + \cos\psi(\rho,z)\, e_z,
\end{equation}
for some real-valued function $\psi\in H^1_{loc}(\Omega_{cyl})$ defined in the domain
\begin{equation}\label{Omcyl}
\Omega_{cyl}:=\left\lbrace (\rho,z)\in\mathbb R_+ \times\mathbb R \colon \rho^2+z^2>1\right\rbrace.
\end{equation}
We consider here strong anchoring conditions given by
\begin{equation}\label{nstronganchor}
n =e_r \quad\text{for }\abs{x}=1,
\end{equation}
and the far-field conditions in integral form
\begin{equation}\label{ncondinftyhardy}
\int_\Omega \frac{(n_1)^2 + (n_2)^2}{\abs{x}^2} dx.
\end{equation}
As in Section~\ref{sec:existence}, the existence of an axially symmetric $\mathbb S^2$ valued map $n$ minimizing the Dirichlet functional 
\begin{equation*}
E(n)=\int_\Omega \abs{\nabla n}^2,
\end{equation*}
under the conditions \eqref{nstronganchor}-\eqref{ncondinftyhardy} follows from the direct method of the calculus of variations and Hardy's inequality. Such a map is analytic away from a discrete set of singularities on the $z$-axis \cite{hardtkinderlehrerlin90}. Here we prove:

\begin{thm}\label{thm:onesing}
Let $n\in H^1_{loc}(\Omega;\mathbb S^2)$ be a minimizer of the Dirichlet functional $E$ among all axially symmetric $\mathbb S^2$-valued maps satisfying \eqref{nstronganchor}-\eqref{ncondinftyhardy}. Then $n$ is analytic away from exactly one point defect: there exists $|z_0|>1$ such that $n$ is analytic in $\overline\Omega\setminus \{(0,0,z_0)\}$.
\end{thm}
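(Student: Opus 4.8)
The plan is to work entirely with the scalar angle $\psi$ of the representation \eqref{npsi} on the half-plane $\Omega_{cyl}$ and to track the sign of $n_3=\cos\psi$. First I would record the structure of $n$ along the $z$-axis. Since $e_\rho$ is undefined on the axis, continuity of $n$ forces $\sin\psi=0$, hence $n=\pm e_z$ (equivalently $n_3=\pm1$) at every regular axis point; a defect is precisely a point of the axis where $n_3$ jumps between $+1$ and $-1$. The boundary data pin down the signs: the strong anchoring \eqref{nstronganchor} gives $\psi=\varphi$ on $\partial B$, so $n_3=+1$ at the north pole and $n_3=-1$ at the south pole, while \eqref{ncondinftyhardy} together with Proposition~\ref{prop:existence} forces $n_3\to+1$ at infinity along both axis rays. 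Reading off parities, the upper ray $\{z\ge1\}$ carries an even number of defects and the lower ray $\{z\le-1\}$ an odd (hence positive) number, so the total number $D$ of defects is odd and $D\ge1$. I then set $U^+:=\{n_3>0\}$, $U^-:=\{n_3<0\}$, and $\Gamma:=\{n_3=0\}$; by the analyticity of $n$ away from the axis \cite{hardtkinderlehrerlin90}, $\Gamma$ is a locally finite union of analytic curves in the open half-plane, and the whole argument reduces to two assertions: (i) $U^+$ and $U^-$ are each connected, and (ii) this forces $D=1$.

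For (i) I would use an energy-comparison/reflection argument. The reflection $\psi\mapsto\pi-\psi$ corresponds to the isometry $n_3\mapsto-n_3$ of $\mathbb S^2$ and hence preserves the Dirichlet energy. A neighbourhood of infinity and of the upper arc $\{\varphi<\pi/2\}$ of $\partial B$ lies in $U^+$, defining its \emph{main} component; by continuity of the boundary data any component of $U^+$ meeting the open upper arc coincides with it, so a spurious component $V$ can touch $\partial B$ only at the single equatorial point $(1,0)$ and otherwise has $\partial V\cap\{\rho>0\}\subset\Gamma$ together with axis segments and defects. Replacing $\psi$ by $\pi-\psi$ on such a $V$ leaves the boundary data untouched (the axis only requires $n_3=\pm1$, which is preserved) and produces a competitor of \emph{equal} energy, hence again a minimizer; but across the analytic level curve $\partial V\cap\{\rho>0\}$ the glued map has matching values $\psi=\pi/2$ and opposite normal derivatives, so its gradient jumps wherever $\nabla\psi\ne0$ on $\Gamma$ --- contradicting the interior analyticity of minimizers. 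Thus $U^+$ has no spurious component; the identical argument (reflecting into $U^+$) shows $U^-$ is connected. The same reflection trick also excludes closed loops in $\Gamma$, since a loop would bound a region of constant sign forming a separate component of $U^\pm$.

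Granting (i), assertion (ii) is a planar counting argument. Since $\Omega_{cyl}$ is simply connected (topologically a disk, with the point at infinity compactifying its boundary), $\Gamma$ is a finite disjoint union of simple analytic arcs whose endpoints are exactly the boundary points where $n_3=0$: the single equatorial point $(1,0)$ and the $D$ defects. Using the local hedgehog model $n\approx\pm x/|x|$ at a degree-$\pm1$ axial defect, exactly one arc of $\Gamma$ issues from each defect, so the marked points are $D+1$ in number and are paired by the arcs. A system of $k$ disjoint chords cuts the disk into $k+1$ regions whose adjacency graph is a tree, the regions being two-coloured by the (transversally changing) sign of $n_3$; connectedness of both colour classes $U^+$ and $U^-$ forces each class to be a single region, hence the tree has two nodes and $k=1$. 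Therefore $\Gamma$ is a single arc joining $(1,0)$ to one defect, $D=1$, and by the parity above this defect lies on the lower ray, i.e.\ at some $(0,0,z_0)$ with $z_0<-1$.

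The main obstacle is step (i): making the reflection argument rigorous requires controlling the regularity of the level set $\Gamma$ near the axis and near $\partial B$, verifying that the glued competitor genuinely lies in the admissible class with unchanged boundary conditions, and ruling out the degenerate possibility that $\nabla\psi\equiv0$ along an entire component of $\Gamma$ (which would defeat the gradient-jump contradiction); the latter is handled by the analyticity of $n$, which makes $\Gamma$ a union of analytic curves on which critical points of $n_3$ are isolated. The counting in step (ii) is then essentially combinatorial once the local structure of $\Gamma$ at the defects and at the equatorial point is established.
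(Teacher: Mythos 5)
Your core mechanism --- connectedness of the two sign regions $\{n_3>0\}$ and $\{n_3<0\}$ via an equal-energy reflection competitor ($\psi\mapsto\pi-\psi$, i.e.\ $n_3\mapsto -n_3$) combined with analyticity of minimizers --- is exactly the paper's Step~1, with your sets $U^\pm$ coinciding with the paper's $X_\mp$. The paper implements the reflection more cleanly, by taking the competitor to be $\min(\psi,\pi-\psi)$ (resp.\ $\max$) on the \emph{entire} complement of the main component and then invoking unique continuation of analytic functions, which sidesteps the gradient-jump discussion and the degenerate case $\nabla\psi\equiv 0$ on the interface that you have to handle separately. Likewise, your step~(ii) reaches the right conclusion but through a heavier route: the chord-counting argument requires $\Gamma=\{n_3=0\}$ to be a finite union of pairwise disjoint simple analytic arcs, with exactly one arc emanating from each defect and none terminating in the interior --- all provable, but none of it is needed. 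The paper's Step~2 gets ``at most one defect'' from connectedness alone: two defects would produce four interleaved axis points with alternating signs, hence a path in $X_+$ and a path in $X_-$ that must cross, which is absurd. You should be aware that this softer argument exists, since it removes the most fragile structural claims from your write-up.

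The genuine gap is in your ``at least one defect'' step. You assert that \eqref{ncondinftyhardy} together with Proposition~\ref{prop:existence} forces $n_3\to +1$ at infinity along both axis rays, and your parity count hinges on this. Proposition~\ref{prop:existence} concerns solutions of the Landau-de~Gennes system \eqref{equilibrium}, not the limiting harmonic map $n$; and even granting $Q_n\to Q_\infty$, this only yields $n\otimes n\to e_z\otimes e_z$, i.e.\ $n_3^2\to 1$, with no control on the sign. In particular the configuration in which $n_3\to+1$ on the upper ray and $n_3\to-1$ on the lower ray (morally, $n\approx e_r$ at infinity, degree $1$ on large spheres) is \emph{not} excluded by anything you have written, and in that scenario your parity count gives an even total number of defects, compatible with $D=0$. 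Ruling this out is precisely the content of the paper's Step~3: if there were no singularity, $\deg(n,\partial B_r)=\deg(n,\partial B_1)=1$ for all $r$, and the pointwise lower bound
\begin{equation*}
\deg(n,\partial B_r)\leq \frac 14 \int_0^\pi \frac{\sin^2\psi}{\sin^2\varphi}\sin\varphi\, d\varphi+\frac 14 \int_0^\pi (\partial_\varphi\psi)^2\sin\varphi\, d\varphi
\end{equation*}
forces $E(\psi)\geq 4\int_1^\infty dr=\infty$. Some such energy-versus-degree argument is unavoidable here; your proof needs it (or an equivalent asymptotic analysis showing $n$ has a \emph{single} limit $\pm e_z$ at infinity) and currently lacks it. Relatedly, your final localization of the defect on the lower ray ($z_0<-1$) rests on the same unjustified claim; the theorem only asserts $|z_0|>1$.
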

\begin{proof}[Proof of Theorem~\ref{thm:onesing}.]

\textbf{Preliminaries.}
Let $\psi\in H^1_{loc}(\Omega_{cyl})$ be the function associated to $n$ through \eqref{npsi}.
 Then $\psi$ minimizes the energy
\begin{equation*}
E(\psi)=\int_{\Omega_{cyl}} \left[ \abs{\partial_\rho \psi}^2 + \abs{\partial_z \psi}^2 +\frac{1}{\rho^2}\sin^2\psi \right]\rho d\rho dz,
\end{equation*}
among all functions $\psi\in H^1_{loc}(\Omega_{cyl})$ such that the corresponding $n$ satisfies \eqref{nstronganchor}-\eqref{ncondinftyhardy}. Next we express these boundary conditions in terms of $\psi$.

The strong anchoring condition \eqref{nstronganchor} is more conveniently expressed using spherical coordinates $(r,\theta,\varphi)$:
\begin{equation}\label{psistronganchor}
\psi = \varphi\quad\text{for }r=1.
\end{equation}
It holds in the sense of traces, which makes sense as soon as $E(\psi)<\infty$. (In fact we should have written $\psi\equiv\varphi$ mod $2\pi$, but since any $\mathbb Z$-valued function of regularity $H^{1/2}$ is constant \cite{bourgainbrezismironescu00} we may reduce to the above.)

The far-field condition \eqref{ncondinftyhardy} becomes
\begin{equation}\label{psicondinftyhardy}
\quad \int_{\Omega_{cyl}}\frac{\sin^2\psi}{\rho^2+z^2} \rho d\rho dz <\infty.
\end{equation}
Hence the class of admissible functions consists exactly of the $\psi\in H^1_{loc}(\Omega_{cyl})$ satisfying $E(\psi)<\infty$ and \eqref{psistronganchor}-\eqref{psicondinftyhardy}.

The Euler-Lagrange equation satisfied by $\psi$ is
\begin{equation}\label{eqpsi}
\partial_z^2 \psi +\partial_\rho^2\psi +\frac{1}{\rho}\partial_\rho\psi =\frac{1}{2\rho^2}\sin(2\psi)\quad\text{in }\Omega_{cyl}.
\end{equation}
Note that, by elliptic regularity, any solution of \eqref{eqpsi} is real-analytic away from the $z$-axis $\lbrace \rho=0\rbrace$.
Also note that, since replacing $\psi$ by $\max(\psi,0)$ or $\min(\psi,\pi)$ does not change the boundary conditions and decreases the energy, it holds
\begin{equation*}
0\leq \psi \leq \pi.
\end{equation*}

The rest of the proof is divided into 3 steps.

\noindent\textbf{Step 1.}
We claim that the open subsets of $\Omega_{cyl}\cap \lbrace \rho>0\rbrace$,
\begin{equation*}
X_+=\lbrace \psi > \pi/2\rbrace\text{ and }X_-=\lbrace \psi <\pi/2\rbrace,
\end{equation*}
 are connected. 

We split the half-circle $\partial\Omega_{cyl}\cap\lbrace \rho>0\rbrace$ into two arcs:
\begin{equation*}
A_\pm = \left\lbrace r=1,\; \varphi=\pi/2 \pm t \colon t\in (0,\pi/2) \right\rbrace.
\end{equation*}
The boundary conditions ensure that $A_\pm\subset X_\pm$. We denote by $\omega_\pm$ the connected component of $X_\pm$ containing  $A_\pm$.

Let us show first that $X_+=\omega_+$.
Consider the function
\begin{equation*}
\widetilde\psi=\begin{cases}
\psi&\quad\text{in }\omega_+,\\
\min(\psi,\pi-\psi)&\quad\text{in }\Omega_{cyl}\setminus\omega_+.
\end{cases}
\end{equation*}
Then it can be checked that $\widetilde\psi\in H^1_{loc}(\Omega_{cyl})$. Moreover, $E(\widetilde\psi)=E(\psi)$ and $\widetilde\psi$ clearly satisfies the boundary conditions \eqref{psistronganchor}-\eqref{psicondinftyhardy}. 
Therefore $\widetilde\psi$ minimizes $E$, and is analytic away from the $z$-axis. In particular, $\min(\psi,\pi-\psi)$ is analytic in $\Omega_{cyl}\setminus \overline\omega_+$.

On the other hand, since $X_-$ is open and non-void, there is an open subset of $\Omega_{cyl}\setminus\overline\omega_+$ in which $\psi<\pi/2$. In that open subset, the two analytic functions $\psi$ and $\min(\psi,\pi-\psi)$ coincide, so they must coincide in the whole $\Omega_{cyl}\setminus\overline\omega_+$. We deduce that 
\begin{equation*}
\psi\leq \pi-\psi\quad\text{i.e.}\quad\psi\leq\pi/2 \qquad\text{in }\Omega_{cyl}\setminus\overline\omega_+,
\end{equation*}
and therefore $X_+=\omega_+$ is connected.

To show that $X_-$ is connected, consider 
\begin{equation*}
\widetilde\psi=\begin{cases}
\psi&\quad\text{in }\omega_-,\\
\max(\psi,\pi-\psi)&\quad\text{in }\Omega_{cyl}\setminus\omega_-.
\end{cases}
\end{equation*} 
As above, $E(\widetilde\psi)=E(\psi)$ and we conclude that
 $X_-=\omega_-$ is connected.
 
\noindent\textbf{Step 2.} There is at most one singularity.

We know \cite{hardtkinderlehrerlin90} that $n$ is analytic in $\Omega$ away from a set of isolated points
\begin{equation*}
Z\subset\lbrace \rho=0,\; \abs{z}>1 \rbrace.
\end{equation*}
In particular $\psi$ is continuous in $\overline\Omega_{cyl}\setminus Z$, and since
\begin{equation*}
\int_{\Omega_{cyl}} \frac{\sin^2\psi}{\rho}d\rho dz \leq E(\psi) <\infty,
\end{equation*}
and $0\leq \psi\leq \pi$, it follows that
\begin{equation*}
\psi \in \lbrace 0,\pi\rbrace \quad\text{on }(\partial\Omega_{cyl} \cap \lbrace \rho=0\rbrace)\setminus Z.
\end{equation*}
At every point $(0,z_0)\in Z$, $\psi$ must be discontinuous, because otherwise $n$ would be continuous around that point (and then real analytic). Therefore it must hold
\begin{equation*}
\text{either }\psi=\begin{cases}
0&\text{ in }(z_0-\delta,z_0),\\
\pi &\text{ in }(z_0,z_0+\delta),
\end{cases}
\quad\text{ or }\psi=\begin{cases}
\pi&\text{ in }(z_0-\delta,z_0),\\
0 &\text{ in }(z_0,z_0+\delta),
\end{cases}
\end{equation*}
for some $\delta>0$.

Let us argue by contradiction and assume that there exist two distinct points
\begin{equation*}
(0,z_1),\, (0,z_2) \in Z,\quad z_1< z_2,\quad [z_1,z_2]\cap Z=\lbrace z_1,z_2\rbrace.
\end{equation*}
There are three cases: either $z_1<z_2<-1$, or $z_1<-1<1<z_2$, or $1<z_1<z_2$. Note that the boundary conditions (together with the boundary regularity of $n$ \cite{hardtkinderlehrerlin90}) ensure that $\psi=\pi$ in $(-1-\delta,-1)$ and $\psi=0$ in $(1,1+\delta)$ for some $\delta>0$. In all three cases, it is easy to see that there must exist four distinct points 
\begin{equation*}
(0,z'_j)\notin Z,\quad z'_1<z'_2<z'_3<z'_4,
\end{equation*}
such that
\begin{gather*}
\text{either}\quad
\psi(z'_1)=\psi(z'_3)=0,\; \psi(z'_2)=\psi(z'_4)=\pi,\\
\quad\text{ or}\quad
\psi(z'_1)=\psi(z'_3)=\pi,\; \psi(z'_2)=\psi(z'_4)=0.
\end{gather*}
We may assume that we are in the first case (the second case can be dealt with similarly). Then by continuity there exists $\delta>0$ such that
\begin{gather*}
\left[B((0,z'_1),\delta)\cap\overline\Omega_{cyl}\right] \cup \left[B((0,z'_3),\delta)\cap\overline\Omega_{cyl}\right]\subset X_-,\\
\left[B((0,z'_2),\delta)\cap\overline\Omega_{cyl}\right] \cup \left[B((0,z'_4),\delta)\cap\overline\Omega_{cyl}\right]\subset X_+.
\end{gather*}
Since the sets $X_\pm$ are path-connected we deduce from the above that there is a continuous path $\gamma_-$ from $z'_1$ to $z'_3$ inside $X_-$, and a continuous path $\gamma_+$ from $z'_2$ to $z'_4$ inside $X_+$. Since $z'_1<z'_2<z'_3<z'_4$, these paths must intersect, but then there intersection would belong to $X_-\cap X_+=\emptyset$. This contradiction shows that $Z$ contains at most one point.

\noindent\textbf{Step 3.} There is at least one singularity.

Assume that $Z$ is empty. Then $n$ is continuous in $\overline\Omega$, and therefore 
\begin{equation*}
\deg (n,\partial B_r) = -\frac 12 \int_0^\pi \partial_\varphi \psi(r,\varphi) \sin\psi(r,\varphi)\, d\varphi,
\end{equation*}
is independent of $r\geq 1$. We deduce that
\begin{align*}
1& =\deg (n,\partial B_1)  =\deg(n,\partial B_r)\\
& \leq \frac 14 \int_0^\pi \frac{\sin^2\psi(r,\varphi)}{\sin^2\varphi}\sin\varphi d\varphi +\frac 14 \int_0^\pi \partial_\varphi \psi(r,\varphi)^2 \,\sin\varphi d\varphi,
\end{align*}
which implies $E(\psi)\geq 4\int_1^\infty dr =\infty$, a contradiction.
\end{proof}

Before turning to the proof of Theorem~\ref{thm:large}, we define rigorously the axially symmetric $Q$-tensor maps.  They are the maps $Q\in\mathcal H_\infty$ which satisfy the two following natural symmetry constraints:
\begin{itemize}
\item the map $Q$ is invariant by rotation around the vertical axis:
\begin{equation}\label{invarianceaxial}
Q(Rx) ={}^t \! R Q(x) R\quad\text{ for all rotations }R\text{ of axis }e_z,
\end{equation}
\item the vector $e_\theta$ is everywhere an eigenvector of $Q$:
\begin{equation}\label{ethetaeigen}
Q(x)e_\theta \cdot e_\rho = Q(x) e_\theta \cdot e_z \equiv 0.
\end{equation}
\end{itemize}
These constraints are natural, in the sense that a minimizer of the free energy \eqref{F} under those restrictions is still a solution of the complete (unconstrained) Euler-Lagrange system \eqref{equilibrium} -- as can be easily checked. We denote this class of symmetric maps by $\mathcal H_\infty^{sym}\subset\mathcal H_\infty$.


\begin{proof}[Proof of Theorem~\ref{thm:large}.]
Recall that we assume $r_0=1$ and we are therefore considering the limit $1/L\to\infty$.
Since axial symmetry \eqref{invarianceaxial}-\eqref{ethetaeigen} is clearly preserved by pointwise convergence, we may proceed exactly as in \cite[Proposition~1]{bbh93} to obtain  a subsequence $(Q_k)$ converging in $\mathcal H_\infty$ to an axially symmetric $\mathcal U_*$-valued map $Q_*$ minimizing the Dirichlet energy $\int_\Omega \abs{\nabla Q}^2$. The estimates in \cite{majumdarzarnescu10,nguyenzarnescu13} show that the convergence is in fact locally uniform in $\overline\Omega$, away from the singular set of $Q_*$.

Because $\Omega$ is simply connected, $\mathcal U_*$-valued $H^1_{loc}$ maps can be lifted to $\mathbb S^2$-valued $H^1_{loc}$ maps \cite{ballzarnescu11}: for any $\mathcal U_*$-valued  $Q\in\mathcal H_\infty$, there exists $n\in H^1_{loc}(\Omega;\mathbb S^2)$ such that
\begin{equation*}
Q=Q_n := s_*(n\otimes n-I/3).
\end{equation*}
Therefore, the limiting map $Q_*$ can be written as $Q_*=Q_n$, and the map $n$ minimizes the Dirichlet energy in the class
\begin{equation*}
\mathcal H_*^{sym} :=\left\lbrace n\in H^1_{loc}(\Omega;\mathbb S^2)\colon Q_n\in \mathcal H_\infty^{sym} \text{ with strong anchoring }\eqref{stronganchor} \right\rbrace.
\end{equation*}

To conclude the proof, it remains to show that the class $\mathcal H_*^{sym}$ actually corresponds to the class considered in Theorem~\ref{thm:onesing}. 

Using the fact that $\abs{n}^2=1$, we calculate
\begin{equation*}
s_*^{-2}\abs{Q_n-Q_\infty} = 2(n_1^2 +n_2^2),
\end{equation*}
so that $Q_n$ satisfying \eqref{condinftyhardy} translates into $n$ satisfying \eqref{ncondinftyhardy}.

The strong anchoring condition \eqref{stronganchor} for $Q_n$ is equivalent to
\begin{equation*}
n_{|\partial B} =\tau \, e_r,
\end{equation*}
for some $\lbrace \pm 1\rbrace$-valued function $\tau$, which must be of regularity $H^{1/2}$ and therefore constant \cite{bourgainbrezismironescu00}. Therefore, up to multiplying the map $n$ by a sign, the strong anchoring condition becomes \eqref{nstronganchor}.

The fact that $Q_n$ admits $e_\theta$ as an eigenvector \eqref{ethetaeigen} is equivalent to
\begin{equation*}
(n\cdot e_\theta)(n\cdot e_\rho) = (n\cdot e_\theta)(n\cdot e_z) =0 \quad\Leftrightarrow\quad n\cdot e_\theta \in \lbrace 0,\pm 1\rbrace.
\end{equation*}
Since the function $n\cdot e_\theta$ is $H^1_{loc}$, it must therefore be constant. The boundary conditions prevent it to be equal to $\pm 1$, so that $n\cdot e_\theta \equiv 0$.

The invariance by rotation \eqref{invarianceaxial} for $Q_n$ is equivalent to $n(Rx)=\pm Rn(x)$ for any rotation $R$ of axis $e_z$. The sign $\pm 1$ may depend on $x$ and on $R$, but the $H^1_{loc}$ regularity implies that it does not depend on $x$. Therefore it holds, using cylindrical coordinates,
\begin{equation*}
n(\rho,\theta,z)=\tau(\theta)\, R_\theta\, n(\rho,0,z),\quad\tau(\theta)=\pm 1,
\end{equation*}
where $R_\theta\in SO(3)$ is the rotation of axis $e_z$ and angle $\theta$. The function $\tau$ is easily seen to belong to $H^{1/2}(\mathbb R/2\pi\mathbb Z)$: we conclude that $\tau\equiv 1$.

Therefore the axial symmetry \eqref{invarianceaxial}-\eqref{ethetaeigen} of $Q_n$ is equivalent to
\begin{equation*}
n(\rho,\theta,z)=R_\theta \, n(\rho, 0,z),\quad \text{and}\quad n\cdot e_\theta \equiv 0,
\end{equation*}
which implies that 
\begin{equation*}
n(\rho,\theta,z)=u_1(\rho,z) \, e_\rho + u_2(\rho, z) e_z,\quad u\in H^1_{loc}(\Omega_{cyl};\mathbb S^1).
\end{equation*}
Since $\Omega_{cyl}$ is simply connected, $u$ can be lifted to a real-valued function $\psi\in H^1_{loc}(\Omega_{cyl})$: $u_1=\sin\psi$, $u_2=\cos\psi$. Therefore $n$ is of the form \eqref{npsi}, and $\mathcal H_*^{sym}$ corresponds indeed (up to a sign) to the class of axially symmetric $\mathbb S^2$-valued maps satisfying \eqref{nstronganchor}-\eqref{ncondinftyhardy}.
\end{proof}

\bibliographystyle{plain}
\bibliography{saturn}

\end{document}